\theoremstyle{plain}
\newtheorem{thm}{Theorem}[section]
\newtheorem*{thm*}{Theorem}
\newtheorem*{lem*}{Lemma}
\newtheorem{cor}[thm]{Corollary}
\newtheorem{prop}[thm]{Proposition}
\newtheorem{lem}[thm]{Lemma}
\theoremstyle{definition}
\newtheorem{rmk}[thm]{Remark}
\newcommand{\ov}{\overline}
\newcommand{\eps}{\varepsilon}
\newcommand{\tto}{\longrightarrow}
\DeclareMathOperator{\dist}{dist}
\DeclareMathOperator{\Sz}{Sz}
\newcommand{\HB}{\text{H{\kern -0.35em}B}}
\begin{document}
\title{Asymptotic geometry of Banach spaces and uniform quotient maps}

\author{S.~J. Dilworth}

\address{Department of Mathematics, University of South Carolina,
Columbia, SC 29208, USA.} \email{dilworth@math.sc.edu}

\author{Denka Kutzarova}
\address{Institute of Mathematics, Bulgarian Academy of
Sciences, Sofia, Bulgaria.} \curraddr{Department of Mathematics, University of Illinois
at Urbana-Champaign, Urbana, IL 61801, USA.} \email{denka@math.uiuc.edu}

\author{G. Lancien}

\address{Universit\'{e} de Franche-Comt\'{e}, Laboratoire de Math\'{e}matiques UMR 6623,
16 route de Gray, 25030 Besan\c{c}on Cedex, FRANCE.}
\email{gilles.lancien@univ-fcomte.fr}

\author{N.~L. Randrianarivony}

\address{Department of Mathematics and Computer Science, Saint Louis University, St. Louis, MO 63103, USA.}
\email{nrandria@slu.edu}

\begin{abstract} Recently, Lima and Randrianarivony pointed out the role of the property $(\beta)$ of Rolewicz in nonlinear quotient problems, and answered a ten-year-old question of Bates, Johnson, Lindenstrauss, Preiss and Schechtman.  In the present paper, we prove that the modulus of asymptotic uniform smoothness of the range space of a uniform quotient map can  be compared with the modulus of $(\beta)$ of the domain space.  We also provide conditions under which this comparison can be improved.
\end{abstract}
\subjclass[2010]{Primary 46B80; Secondary 46B20}
\thanks{The first author was partially supported by NSF grant  DMS1101490.    All authors were supported by the Workshop in Analysis and Probability at Texas A\&M University in summer 2011.  The last author was supported in part by a Young Investigator award from this NSF funded Workshop.}

\maketitle

\section{Introduction}

The following definitions are taken from \cite[Chapter 11]{BenyaminiLindenstrauss2000}.  A map $T:X\tto Y$ between two metric spaces $X$ and $Y$ is called \emph{co-uniformly continuous} if for every $\eps >0$, there exists $\delta>0$ such that for every $x\in X$,
$$B(Tx, \delta) \subset T\left(B(x,\eps)\right).$$
If there exists a constant $c>0$ independent of $\eps$ such that $\delta$ can be chosen to be $c.\eps$, then $T$ is called \emph{co-Lipschitz}. Here and throughout this article $B(x,\eps)$ denotes the closed ball of center $x$ and radius $\eps$.

A map $T:X\tto Y$ that is both uniformly continuous and co-uniformly continuous (resp. Lipschitz and co-Lipschitz) is called a \emph{uniform quotient} (resp. \emph{Lipschitz quotient}) map.  If $T$ is also surjective, then $Y$ is called a \emph{uniform quotient}  (resp. \emph{Lipschitz quotient}) of $X$.

\indent

In \cite{LimaRandrianarivony2010}, the use of property $(\beta)$ of Rolewicz (see \cite{Rolewicz1987too} and \cite{Kutzarova1991}) was implemented in the study of uniform quotient maps.  There the authors prove that a Banach space that is a uniform quotient of $\ell_p$ for $1<p<2$ has to be linearly isomorphic to a linear quotient of $\ell_p$.  Another result from \cite{LimaRandrianarivony2010} also states that the Banach space $c_0$ cannot be a uniform quotient of a Banach space with property $(\beta)$.  

In the present paper, we deepen the techniques used in \cite{LimaRandrianarivony2010} in terms of the asymptotic geometry of the Banach spaces $X$ and $Y$, when $Y$ is a uniform quotient of $X$.

\section{Asymptotic geometry}

The study of the asymptotic geometry of a Banach space goes back to Milman \cite{Milman1971}, where he introduces among other things the notions of asymptotic uniform convexity and asymptotic uniform smoothness, although he uses different names and different notation.  The modern names and notation are introduced in \cite{JohnsonLindenstraussPreissSchechtman2002}.

Let $(X,\|\ \|)$ be a Banach space and
$t>0$. We denote by $B_X$ the closed unit ball of $X$ and by $S_X$
its unit sphere. For $x\in S_X$ and $Y$ a closed linear subspace
of $X$, we define
$$\overline{\rho}_X(t,x,Y)=\sup_{y\in S_Y}\|x+t y\|-1\ \ \ \ {\rm and}\ \ \
\ \overline{\delta}_X(t,x,Y)=\inf_{y\in S_Y}\|x+t y\|-1,$$ then
$$\overline{\rho}_X(t,x)=\inf_{{\rm
dim}(X/Y)<\infty}\overline{\rho}_X(t,x,Y)\ \ \ \ {\rm and}\ \ \ \
\overline{\delta}_X(t,x)=\sup_{{\rm
dim}(X/Y)<\infty}\overline{\delta}_X(t,x,Y).
$$
Finally
$$\overline{\rho}_X(t)=\sup_{x\in S_X}\ \inf_{{\rm
dim}(X/Y)<\infty}\overline{\rho}_X(t,x,Y)\ \ \ \ {\rm and}\ \ \ \
\overline{\delta}_X(t)=\inf_{x\in S_X}\ \sup_{{\rm
dim}(X/Y)<\infty}\overline{\delta}_X(t,x,Y).$$ The norm $\|\ \|$ is said to be
\emph{asymptotically uniformly smooth} (in short AUS) if
$$\lim_{t \to 0}\frac{\overline{\rho}_X(t)}{t}=0.$$
It is said to be \emph{asymptotically uniformly convex} (in short
AUC) if
$$\forall t>0\ \ \ \ \overline{\delta}_X(t)>0.$$

\indent

It is easy to check (see also \cite{JohnsonLindenstraussPreissSchechtman2002}) that a Banach space $X$ that is an $\ell_p$-sum of finite-dimensional spaces with $1<p<\infty$ is both asymptotically uniformly convex and asymptotically uniformly smooth, and that $\ov{\delta}_X(t)=\ov{\rho}_X(t)=(1+t^p)^{1/p}-1$.  Similarly, a Banach space $E$ that is a $c_0$-sum of finite-dimensional spaces is also asymptotically uniformly smooth, and $\ov{\rho}_E(t)=0$ for all $0<t\le 1$. Note that if there exists $\tau>0$ such that $\ov{\rho}_X(t)=0$ for all $0<t<\tau$, the Banach space $X$ is called \emph{asymptotically uniformly flat} (see\cite{GodefroyKaltonLancien2000}).

\indent

Another asymptotic property will be crucial in this paper, as it was in \cite{LimaRandrianarivony2010}. It was introduced by S. Rolewicz in \cite{Rolewicz1987too} and is now called property $(\beta)$ of Rolewicz. For its definition, we shall use a characterization due to D. Kutzarova \cite{Kutzarova1991}.

An infinite-dimensional Banach space $X$ is said to have property $(\beta)$ if for any $t\in (0,a]$, where the number $1\leq a\leq 2$ depends on the space X,  there exists $\delta>0$ such that for any $x$ in $B_X$ and any $t$-separated sequence $(x_n)_{n=1}^\infty$ in $B_X$, there exists $n\ge 1$ so that
$$\frac{\|x-x_n\|}{2}\le 1-\delta.$$
For a given $t\in (0,a]$, we denote $\ov{\beta}_X(t)$ the supremum of all $\delta\ge 0$ so that the above property is satisfied.

\noindent Let us mention that this modulus was computed for $\ell_p$ ($1<p<\infty$) by Ayerbe, Dom\'inguez Benavidez, and Cutillas in \cite{AyerbeDominguez_BenavidesCutillas1994} and was denoted $R^{''}_X(\cdot)$. We have chosen to use here the notation $\ov{\beta}_X(\cdot)$ as we believe it is more informative and more coherent with the other moduli.

It was shown  in \cite{Kutzarova1989} and \cite{MontesinosTorregrosa1992} that there are Banach spaces with property ($\beta$) that are not superreflexive. We give an isomorphic characterization of spaces which can be renormed to have property ($\beta$) below.

\section{Asymptotic geometry and uniform quotient maps}\label{steve1}

We start this section by recalling some elementary facts about uniform quotients that can be found in \cite[Chapter 11]{BenyaminiLindenstrauss2000}.

Let $(X,d_X)$ and $(Y,d_Y)$ be metric spaces and assume that $T: X \tto Y$ is a surjective uniform quotient. Denote by $\Omega^T$ the modulus of continuity of $T$, i.e.
$$\Omega^T(d)=\sup\{d_Y(Tx,Ty): x,y\in X, d_X(x,y)\leq d\}.$$
Note that if $X$ is metrically convex (in particular if $X$ is a normed space), then
$$\left(d>0,\ x,x' \in X,\ d_X(x,x')\ge d\right)\Rightarrow \left(d_Y(Tx,Tx')\le 2\frac{\Omega^T(d)}{d}d_X(x,x')\right).$$
Hence, for any $d>0$, the map $T$ is Lipschitz for large distances with $L_d^T\leq 2\frac{\Omega^T(d)}{d}$, where $$L_d^T=\sup\left\{\frac{d_Y(Tx,Ty)}{d_X(x,y)}: x,y\in X, d_X(x,y)\geq d\right\}$$ is the Lipschitz constant of $T$ for distances $\geq d$.

\noindent Note also that if $Y$ is metrically convex, then $T$ is co-Lipschitz for large distances. It means that for any $d>0$ there exists $c>0$ such that for any $r\ge d$ and any $x\in X$, $B(Tx,cr)\subset T(B(x,r))$.

We now follow the notation from \cite{LimaRandrianarivony2010} and, for $d>0$, we denote by $c_d$ (or $c_d^T$ to avoid ambiguity) the supremum of all $c>0$ such that whenever $r\geq d$, $x\in X$, $d_Y(y,Tx)<cr$, we have $y\in T(B(x,r))$. Note that $\{c_d\}_{d>0}$ is non decreasing. If we assume moreover that $T$  is Lipschitz for large distances and $Y$ unbounded, we have that $C^T:=\lim_{d\to\infty} c_d \leq L_1^T<\infty$ (see \cite[Chapter 11]{BenyaminiLindenstrauss2000} and \cite{LimaRandrianarivony2010} for details).

\medskip We now state our main result.

\begin{thm}{\label{basis}} Let $X$ and $Y$ be infinite-dimensional Banach spaces, $S$ be a subset of $X$, and $T$ be a uniform quotient map from $S$ onto $Y$ (and therefore co-Lipschitz for large distances) which is Lipschitz for large distances.  Let $t_0\in (0,1]$. Assume that there exists $\alpha\in [0,1]$ such that for all $y\in S_Y$ and all $\eta >0$ there exists a normalized basic sequence $(e_n)_n$ with basis constant at most $2$ such that $\|y\pm t_0e_n\|\leq 1+\alpha+\eta$ for every $n\geq 1$. Then
$$\frac 23 \ov{\beta}_X \left( \frac{C^Tt_0}{12 L_1^T}\right) \leq \alpha.$$

\end{thm}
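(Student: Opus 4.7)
My plan is to argue by contradiction: assume $\alpha$ is strictly less than the claimed quantity, so that $\overline{\beta}_X(s)>\tfrac{3}{2}(\alpha+\eta_0)$ for some small $\eta_0>0$, where $s=C^Tt_0/(12L_1^T)$, and build a configuration in $B_X$ on which property $(\beta)$ at scale $s$ over-delivers. Fix $\eta>0$ small, and $R>0$ large enough that $T$ is essentially $L_1^T$-Lipschitz and the co-Lipschitz constant $c_R$ is essentially $C^T$ on the relevant scales; pick any $y\in S_Y$. By hypothesis we get a normalized basic sequence $(e_n)$ with basis constant $\leq 2$ satisfying $\|y\pm t_0 e_n\|\leq 1+\alpha+\eta$. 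Using surjectivity, choose $x\in S$ with $Tx=Ry$, and for each $n$ lift $R(y+t_0 e_n)$ via co-Lipschitz from $x$ to get $x_n\in S$ with $\|x_n-x\|\leq (Rt_0/C^T)(1+o_R(1))$.

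Setting $\rho=(Rt_0/C^T)(1+o_R(1))$ and $\tilde x_n=(x_n-x)/\rho$, each $\tilde x_n$ lies (essentially) in $B_X$, and the Lipschitz-for-large-distances bound together with the basic-sequence estimate $\|e_n-e_m\|\geq 1/2$ gives
\[
\|\tilde x_n-\tilde x_m\|\;\geq\;\frac{Rt_0/2}{L_1^T\rho}\;\geq\;\frac{C^T}{2L_1^T}\cdot(1+o_R(1))\;\geq\;s,
\]
since $t_0\leq 1$, so $(\tilde x_n)$ is $s$-separated in $B_X$. I next choose a reference point $\tilde u\in B_X$ designed to invoke the hypothesis on $\alpha$: a natural first attempt is to take $u\in S$ with $Tu=R(1+\alpha+\eta)y$, lifted from $x$ via co-Lipschitz, so that $\|\tilde u\|\leq(\alpha+\eta)/t_0\leq 1$ in the relevant regime. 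Property $(\beta)$ at scale $s$ then produces $n$ with $\|\tilde u-\tilde x_n\|\leq 2(1-\overline{\beta}_X(s))+\eta'$; pushing forward through $T$ via $\|Tu-Tx_n\|\leq L_1^T\|u-x_n\|$, and combining with the triangle lower bound $\|Tu-Tx_n\|=R\|(\alpha+\eta)y-t_0 e_n\|\geq R(t_0-\alpha-\eta)$, yields one inequality that—after letting $R\to\infty$ and $\eta,\eta'\to 0$—bounds $\alpha$ below in terms of $\overline{\beta}_X(s)$.

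The main obstacle is extracting precisely the factor $\tfrac{2}{3}$: the naive reference above, combined with the crude estimate $\|\tilde x_n-\tilde x_m\|\geq C^T/(2L_1^T)$, loses a multiplicative factor of roughly $6$ between the separation actually achieved and the threshold $s$, and the resulting bound on $\alpha$ depends on $C^T/L_1^T$ rather than on $\overline{\beta}_X(s)$ alone. The sharp argument almost certainly exploits the $\pm$ symmetry in the hypothesis, applying $(\beta)$ either with a reference point built from a lift of $R(y-t_0 e_m)$ for a fixed $m$, or to an auxiliary $s$-separated sequence of the form $\{(x_n^+-x_n^-)/(2\rho)\}$, so that the factor $6$ encoded in $s=C^Tt_0/(12L_1^T)$ converts into precisely the $3/2$ on the $\beta$-side. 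Once the right reference is in place, the rest is a routine chain of triangle inequalities and limits.
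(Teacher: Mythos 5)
There is a genuine gap, and you essentially acknowledge it yourself: your argument, as written, produces a bound on $\alpha$ that ``depends on $C^T/L_1^T$ rather than on $\overline{\beta}_X(s)$ alone,'' and your closing speculation about exploiting the $\pm$ symmetry does not identify the mechanism that actually closes the argument. The missing idea is this: property $(\beta)$ gives a contradiction only if you can exhibit a reference point $u$ and an $s$-separated sequence $(\tilde x_n)$ in $B_X$ with $\|u-\tilde x_n\|$ close to $2$ for \emph{every} $n$. The co-Lipschitz property only ever gives \emph{upper} bounds on how far lifts are, so no choice of reference point obtained by naive lifting (such as your $u$ with $Tu=R(1+\alpha+\eta)y$) can force all the $\tilde x_n$ to stay far from it. The required \emph{lower} bound comes from working at a near-extremal configuration for the co-Lipschitz modulus: since $C^T+\eps>c_{d_0}$, there exist $z_\eps\in S$, $R\ge d_0$ and $y_\eps\in Y$ with $\|y_\eps-Tz_\eps\|<(C^T+\eps)R$ but such that \emph{every} preimage of $y_\eps$ lies at distance $>R$ from $z_\eps$. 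The paper trisects the segment $[y_\eps,Tz_\eps]$, places the origin at a lift of the point $m$ one third of the way from $Tz_\eps$, applies your basic-sequence hypothesis at the midpoint $M$ (with $y_n=M+t_0\frac{D}{3}e_n$), lifts $y_n$ to $z_n$ with $\|z_n\|\le(1+\alpha+\eta)\rho_\eps$, and then — using the minus sign of the hypothesis, since $y_n-y_\eps=-M+t_0\frac{D}{3}e_n$ — lifts $y_\eps$ from $z_n$ to a point $x_n$ with $\|z_n-x_n\|\le(1+\alpha+\eta)\rho_\eps$. Because $Tx_n=y_\eps$, necessarily $\|x_n-z_\eps\|>R\approx 3\rho_\eps$, whence $\|z_\eps-z_n\|\ge(2-\alpha-2\eta)\rho_\eps$ while $\max(\|z_\eps\|,\|z_n\|)\le(1+\alpha+\eta)\rho_\eps$. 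Rescaling by $(1+\alpha+\eta)\rho_\eps$ and using the separation $\|z_n-z_m\|\ge C^Tt_0\rho_\eps/(4L_1^T)$ gives $\overline{\beta}_X\bigl(\tfrac{C^Tt_0}{12L_1^T}\bigr)\le 1-\tfrac{2-\alpha-2\eta}{2(1+\alpha+\eta)}\le\tfrac32\alpha+2\eta$, which is exactly the claimed $\tfrac23\overline{\beta}_X\le\alpha$. The factor $\tfrac32$ thus comes from the ratio $(2-\alpha)/(1+\alpha)\ge 2-3\alpha$, not from the $\pm$ symmetry per se; the $\pm$ is needed to control both $\|z_n\|$ and $\|z_n-x_n\|$ by the same quantity $(1+\alpha+\eta)\rho_\eps$.

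Your separation estimate for the lifted sequence and the general strategy (lift, separate, apply $(\beta)$, push back through $T$) are sound and match the paper's, but without the near-extremal point $y_\eps$ and the trisection there is no source for the lower bound $\|z_\eps-z_n\|\gtrsim 2\rho_\eps$, and the argument cannot be completed along the lines you sketch.
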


\indent

Note that by the introductory remark at the beginning of this section, this theorem includes the case when $S$ is a linear subspace of $X$ and $T$ is a surjective uniform quotient map.  And of course it also evidently includes the case when $T$ is a Lipschitz quotient map from any subset of $X$ onto $Y$.

\begin{proof} We shall follow the proof and notation of \cite[Theorem 4.3]{LimaRandrianarivony2010}. We present the details of this generalization for the sake of completeness.

Again, let $T:S \tto Y$ be as in the assumptions of the theorem, and let $L_d^T$ be the Lipschitz constant of $T$ for distances $\geq d$.

Fix $0<\eps<1$, $\eta>0$ and pick $d_0>0$ large enough so that
\begin{equation}{\label{eq1}}
C^T-\eps<c_{d_0/3}\leq C^T.
\end{equation}
Since $C^T+\eps>c_{d_0}$, there exist $z_{\eps}\in S$, $R\geq d_0$, and $y_{\eps}\in Y$ such that
\begin{equation}{\label{eq2}}
D:=\|y_{\eps}-Tz_{\eps}\|<(C^T+\eps)R,
\end{equation}
but
\begin{equation}{\label{eq3}}
\|x-z_{\eps}\|>R \text { for all } x\in S \text{ with } Tx=y_{\eps}.
\end{equation}
Note that since $R\geq d_0$ and $y_{\eps}\not \in T\left( B(z_{\eps},R)\right)$, it follows that
\begin{equation}{\label{eq4}}
D=\|Tz_{\eps}-y_{\eps}\|\geq c_{d_0}R\ge (C^T-\eps)R.
\end{equation}

\indent

Divide the line segment $[y_{\eps},Tz_{\eps}]$ into three equal-length subsegments: $[y_{\eps},M]$, $[M,m]$, and $[m,Tz_{\eps}]$.  Then by (\ref{eq2}) and (\ref{eq4}), we have
\begin{equation*}
c_{d_0}\frac{R}{3}\leq \|Tz_{\eps}-m\|=\|m-M\|=\|y_{\eps}-M\|=\frac D3 <(C^T+\eps)\frac R3.
\end{equation*}
This gives
$$\|Tz_{\eps}-m\|=\frac D3<(C^T+\eps)\frac R3=\rho_{\eps}c_{d_0/3},$$
where
\begin{equation}{\label{box1}}
\rho_{\eps}:=\left(\frac{C^T+\eps}{c_{d_0/3}} \right)\frac R3>\frac R3\geq \frac{d_0}{3}.
\end{equation}
Hence, by definition of $c_{d_0/3}$, $m=Tz$ for some $z$ with $\|z-z_{\eps}\|\leq \rho_{\eps}$.  By translation if needed [i.e replacing $T:S \tto Y$ with $\widetilde{T}:\widetilde{S}\tto Y$ where $\widetilde{S}=S-z$, and $\widetilde{T}(x)=T(x+z)-m$], let us assume without loss of generality that $m=0$ and $z=0$.  Then $y_{\eps}=2M$, $\|M\|=\frac D3$, and
\begin{equation}{\label{star}}
\|z_{\eps}\|\leq \rho_{\eps}.
\end{equation}

\indent

We now use the other assumption in our statement and pick a normalized basic sequence $(e_n)_n$ with basis constant at most equal to $2$ such that for all $n\geq 1$,
$$\left \|M\pm t_0\frac D3 e_n\right \|\leq \frac D3 \left(1+\alpha+\eta \right).$$
Set $\displaystyle y_n=M+t_0 \frac D3 e_n$.  Then
$$\|y_n\|\leq \frac D3 \left(1+\alpha+\eta \right)<c_{d_0/3}\left( 1+\alpha+\eta\right) \rho_{\eps}.$$
Since $\rho_{\eps}\geq \frac{d_0}{3}$, it follows that $y_n=Tz_n$, where $\displaystyle \|z_n\|\leq \left(1+\alpha+\eta \right)\rho_{\eps}$.  (Recall that $T(0)=0$.)

\indent

Now,

$$y_n-y_{\eps}=\left(M+t_0\frac D3 e_n\right)-2M=-M+t_0\frac D3 e_n.$$
So,
\begin{equation*}
\begin{split}
\|y_n-y_{\eps}\| &\leq \frac D3 \left(1+\alpha+\eta \right)< c_{d_0/3}\left(1+\alpha+\eta \right) \rho_{\eps}.\\
\end{split}
\end{equation*}
So, as above, $y_{\eps}=Tx_n$, where
$\|z_n-x_n\|\leq \left(1+\alpha+\eta\right) \rho_{\eps}.$

\noindent But since $Tx_n=y_{\eps}$, we also have $\|x_n-z_{\eps}\|>R$ from condition (\ref{eq3}).

\indent

In summary, we have:
\begin{itemize}
\item $\|z_{\eps}\|\leq \rho_{\eps}$,
\item $\|z_n\|\leq \rho_{\eps} \left( 1+\alpha+\eta\right ),$
\item $\|z_n-x_n\|\leq \rho_{\eps}\left(1+\alpha+\eta \right ),$
\item and $\|x_n-z_{\eps}\|>R$.
\end{itemize}

\indent

By the triangle inequality,
\begin{equation*}
\begin{split}
\|z_{\eps}-z_n\|& \geq \|z_{\eps}-x_n\|-\|z_n-x_n\|\\
 &> R-\left(1+\alpha+\eta \right) \rho_{\eps}\\
  &= \left( \frac{3c_{d_0/3}}{C^T+\eps}-1-\alpha-\eta\right)\rho_{\eps} \,\,\,\,\,\,\,\,\,\,\,\,\text{ by (\ref{box1})}\\
  &\geq \left( \frac{3(C^T-\eps)}{C^T+\eps}-1-\alpha-\eta\right)\rho_{\eps}.\\
\end{split}
\end{equation*}

We may assume that $\eps>0$ is sufficiently small that
$$\frac{3(C^T-\eps)}{C^T+\eps}-1>2-\eta.$$
Hence
\begin{equation}
\|z_{\eps}-z_n\|\geq (2-\alpha-2\eta)\rho_{\eps}.
\end{equation}

\indent

On the other hand, since $(e_n)_n$ has basis constant at most $2$, we have
$$\|y_n-y_m\|=\frac{t_0D}{3}\|e_n-e_m\|\geq \frac{t_0D}{6} \text{ whenever }m\neq n.$$
But note that
\begin{equation*}
\begin{split}
\frac{t_0D}{6} &\geq \frac{c_{d_0}}{6}Rt_0 \,\,\,\,\,\,\,\,\,\,\,\,\text{ by (\ref{eq4})}\\
 & \geq \frac{(C^T-\eps)}{6}d_0t_0 \,\,\,\,\,\,\,\,\,\,\,\,\text{ since } R\geq d_0.\\
\end{split}
\end{equation*}
So if we started with $\eps>0$ (sufficiently small), and $d_0<\infty$ (sufficiently large) so that
$$(C^T-\eps)\frac{d_0t_0}{6}>\Omega^T(1),$$
then
$$\Omega^T(1)<\|y_n-y_m\|=\|Tz_n-Tz_m\| \text{ for all } m\neq n.$$
As a result, by definition of $\Omega^T(1)$, we have $\|z_n-z_m\|\geq 1$.  Now apply the Lipschitz constant for distances larger than or equal to $1$ to get
\begin{equation*}
\begin{split}
\|z_n-z_m\| &\geq \frac{\|y_n-y_m\|}{L_1^T}\\
 & \geq \frac{t_0D}{6L_1^T}\\
  &\geq \frac{t_0}{6L_1^T}c_{d_0}R\\
  &=  \frac{t_0}{6L_1^T}\left(\frac{3c_{d_0}c_{d_0/3}}{C^T+\eps}\right)\rho_{\eps}\\
  &\geq \frac{t_0}{2L_1^T}\left(\frac{(C^T-\eps)^2}{C^T+\eps}\right)\rho_{\eps}.\\
\end{split}
\end{equation*}
We may assume $\eps>0$ is sufficiently small that $\displaystyle \frac{(C^T-\eps)^2}{C^T+\eps}\geq \frac{C^T}{2}$, so
$$\|z_n-z_m\|\geq \frac{C^Tt_0\rho_{\eps}}{4L_1^T}.$$

\indent

Recall that $\max(\|z_{\eps}\|,\|z_n\|)\leq \rho_{\eps}\left( 1+\alpha+\eta\right)$, and $\|z_{\eps}-z_n\|\geq \left(2-\alpha-2\eta\right) \rho_{\eps}$.
Notice that
$$\frac{2-\alpha-2\eta}{1+\alpha+\eta}\geq 2-3\alpha-4\eta.$$
Hence by definition of $\ov{\beta}_X$,
\begin{equation}{\label{punchline}}
\ov{\beta}_X\left( \frac{C^Tt_0}{12L_1^T}\right)\leq \ov{\beta}_X\left( \frac{C^Tt_0}{4L_1^T\left(1+\alpha+\eta\right)}\right)\leq \frac 32 \alpha+2\eta.
\end{equation}
Note that $\alpha\leq 1$, and $\ov{\beta}_X$ is nondecreasing.  Also note that
$$\displaystyle \frac{C^Tt_0}{12L_1^T} \,\,\text{ and }\,\, \displaystyle \frac{C^Tt_0}{4L_1^T\left(1+\alpha+\eta\right)}$$
both belong to the domain of $\ov{\beta}_X$ because they are both less than or equal to $\displaystyle \frac{C^Tt_0}{4L_1^T}$, which is less than 1 since $t_0\leq 1$. We finish the proof by noting that $\eta$ is arbitrary.

\end{proof}

\medskip

We can now deduce the following.

\begin{cor}{\label{betaAUS}}
Let $T: S\tto Y$ be a uniform quotient mapping from a subset $S\subset X$ onto $Y$ which is Lipschitz for large distances, where $X$ and $Y$ are infinite-dimensional Banach spaces.  Then
$$\frac 23 \ov{\beta}_X \left( \frac{C^Tt}{12L_1^T}\right)\leq \ov{\rho}_Y(t)$$
for all $0<t\leq 1$.
\end{cor}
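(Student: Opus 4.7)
The plan is to obtain Corollary~\ref{betaAUS} as a direct application of Theorem~\ref{basis}, taking $t_0 := t$ and $\alpha := \overline{\rho}_Y(t)$. Since $\|y+tz\|\le 1+t\le 2$ for any $y,z\in S_Y$, one has $\overline{\rho}_Y(t)\le t\le 1$, so this choice of $\alpha$ lies in $[0,1]$ as required by the theorem, and the auxiliary quantity $C^Tt/(12L_1^T)$ lies in the domain of $\overline{\beta}_X$ by exactly the argument given at the end of the proof of Theorem~\ref{basis}.

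The only hypothesis that needs checking is the existence of the normalized basic sequences. Fixing $y\in S_Y$ and $\eta>0$, the definition of $\overline{\rho}_Y$ yields a closed subspace $Z\subset Y$ with $\dim(Y/Z)<\infty$ such that
$\sup_{z\in S_Z}\|y+tz\|-1\le \overline{\rho}_Y(t,y)+\eta/2\le \overline{\rho}_Y(t)+\eta/2$.
Since $-z\in S_Z$ whenever $z\in S_Z$, the same bound holds for $\|y-tz\|$. The subspace $Z$ remains infinite-dimensional, so Mazur's classical construction produces a normalized basic sequence $(e_n)_n\subset Z$ with basis constant arbitrarily close to $1$, and in particular at most $2$. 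These vectors satisfy $\|y\pm te_n\|\le 1+\alpha+\eta$, which is precisely the hypothesis of Theorem~\ref{basis}.

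Invoking Theorem~\ref{basis} with these choices then delivers $\tfrac{2}{3}\overline{\beta}_X\bigl(C^Tt/(12L_1^T)\bigr)\le \alpha = \overline{\rho}_Y(t)$, which is the asserted inequality. I do not expect any genuine obstacle here: the corollary is essentially a repackaging of the theorem in the language of the asymptotic uniform smoothness modulus, the only nontrivial ingredient beyond the definition of $\overline{\rho}_Y$ being the existence, in every infinite-dimensional Banach space, of a basic sequence with basis constant at most $2$.
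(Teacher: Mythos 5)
Your proposal is correct and follows essentially the same route as the paper: extract a finite-codimensional subspace $Z$ from the definition of $\ov{\rho}_Y(t)$, use Mazur's construction to get a normalized $2$-basic sequence in $S_Z$ (the symmetry $\pm$ being free since $S_Z$ is symmetric), and apply Theorem~\ref{basis} with $\alpha=\ov{\rho}_Y(t)\le t\le 1$. Nothing to add.
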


\begin{proof} Let $t\in (0,1]$, $y$ in $S_Y$ and $\eta>0$. By definition of $\ov{\rho}_Y(t)$, there exists a finite co-dimensional subspace $Z$ of $Y$ so that $\sup_{z\in S_Z} \|y+tz\|<1+\ov{\rho}_Y(t)+\eta.$
Since $Y$ is infinite-dimensional, so is $Z$ and Mazur's Lemma insures that there exists a 2-basic sequence $(e_n)$ in the unit sphere of $Z$. It follows that  $\|y\pm te_n\|\leq 1+\ov{\rho}_Y(t)+\eta$ for every $n\geq 1$. The conclusion follows from a direct application of Theorem \ref{basis}.  Note that $0\leq \ov{\rho}_Y(t)\leq t\leq 1$.

\end{proof}

\section{Uniform quotient maps and projections}

Let us start with an elementary lemma.

\begin{lem}{\label{lem2}}
Let $T$ be a uniform quotient map from a subset $S$ of a Banach space $X$ onto a Banach space $Y$ which is Lipschitz for large distances.  Let $P$ be a bounded linear projection from $Y$ onto a closed subspace $Y_0$ of $Y$.  Then,
\begin{enumerate}
\item[(a)] $L_d^{PT}\leq \|P\|L_d^T$ for every $d>0$,

\mbox{}
\item[(b)] $c_d^{PT}\geq c_d^T$ for every $d>0$,

\mbox{}
\item[(c)] and $C^{PT}\geq C^T$.

\mbox{}
\end{enumerate}
\end{lem}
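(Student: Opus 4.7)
The lemma has three parts, of which (a) and (c) are essentially immediate and (b) carries the content. My plan is as follows.

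For (a), I would simply chain the two Lipschitz estimates. Given $x,x'\in S$ with $\|x-x'\|\ge d$, the projection gives
$$\|PTx - PTx'\| \le \|P\|\,\|Tx-Tx'\| \le \|P\|\,L_d^T\,\|x-x'\|,$$
and taking the supremum over such pairs yields the claim. Nothing subtle happens.

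Part (b) is the main step, and the obstacle is that the definition of $c_d^{PT}$ asks for preimages under $PT$ of points in $Y_0$, whereas the co-Lipschitz property of $T$ only supplies preimages under $T$ of points in $Y$. The fix is a lifting trick that exploits the idempotence of $P$. Fix any $c<c_d^T$, any $r\ge d$, any $x\in S$, and any $y\in Y_0$ with $\|y-PTx\|<cr$. I would introduce
$$y' := y + (I-P)Tx \in Y.$$
Since $y\in Y_0$ and $P^2=P$, we have $Py'=y$; moreover $y'-Tx = y - PTx$, so $\|y'-Tx\|<cr$. The definition of $c_d^T$ then produces $x'\in S$ with $\|x'-x\|\le r$ and $Tx'=y'$, whence $PTx' = Py' = y$, i.e.\ $y\in PT(B(x,r))$. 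This shows $c_d^{PT}\ge c$ for every $c<c_d^T$, and letting $c\nearrow c_d^T$ gives $c_d^{PT}\ge c_d^T$.

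Finally (c) is immediate from (b) by letting $d\to\infty$:
$$C^{PT} = \lim_{d\to\infty} c_d^{PT} \ge \lim_{d\to\infty} c_d^T = C^T.$$
The only genuinely nontrivial ingredient in the whole lemma is the construction $y' = y + (I-P)Tx$ in (b); once one realizes that a point of $Y_0$ can be lifted to a point of $Y$ whose distance from $Tx$ equals its distance from $PTx$, everything else is bookkeeping.
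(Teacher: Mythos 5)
Your proposal is correct and follows essentially the same route as the paper: part (b) rests on the identical lifting $y'=y+(I-P)Tx$, which satisfies $\|y'-Tx\|=\|y-PTx\|$ and $Py'=y$, and parts (a) and (c) are the same immediate observations. The only cosmetic difference is that you approximate $c_d^T$ from below by $c<c_d^T$ before passing to the supremum, whereas the paper works with $c_d^T$ directly; both are valid since the defining inequality is strict.
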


\begin{proof}
\mbox{}
\begin{enumerate}
\item[(a)] is clear.

\mbox{}

\item[(b)] Suppose $x\in S$ and $y_0 \in Y_0$ satisfy $\|y_0-PTx\|<c_d^Tr$ where $r\geq d>0.$  Then $$(y_0+(I-P)Tx)-Tx=y_0-PTx.$$
So there exists $x'\in S$ with $\|x-x'\|\leq r$ such that $Tx'=y_0+(I-P)Tx$.  Hence
$$PTx'= Py_0+P(I-P)Tx=y_0.$$
Thus $c_d^{PT}\geq c_d^T$ by definition of $c_d^{PT}$.

\mbox{}

\item[(c)] $\displaystyle C^{PT}=\lim_{d\to \infty} c_d^{PT}\geq \lim_{d\to \infty} c_d^T=C^T$.

\mbox{}
\end{enumerate}
\end{proof}

\indent

\begin{cor}{\label{cor1}}
Assume $X$, $Y$, and $Y_0$ are all infinite-dimensional, let $S\subset X$ and let $T$ and $P$ be maps as in Lemma \ref{lem2}.  Then
$$\frac 23 \ov{\beta}_X\left(\frac{C^Tt}{12\|P\|L_1^T}\right)\leq \ov{\rho}_{Y_0}(t) \text{ for all } 0<t\leq 1.$$
\end{cor}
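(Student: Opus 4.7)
The plan is to apply Corollary \ref{betaAUS} directly to the composition $PT : S \tto Y_0$ and then transfer the resulting inequality back to quantities involving $T$ alone via Lemma \ref{lem2}.

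First I would check that $PT$ itself fits the hypotheses of Corollary \ref{betaAUS}, i.e.\ that it is a uniform quotient map from $S$ onto $Y_0$ that is Lipschitz for large distances, with both $X$ and $Y_0$ infinite-dimensional. Surjectivity onto $Y_0$ is immediate from the surjectivity of $T$ together with the fact that $P$ is a projection onto $Y_0$. Uniform continuity of $PT$ is clear since $P$ is bounded linear and $T$ is uniformly continuous. The fact that $PT$ is Lipschitz for large distances is part (a) of Lemma \ref{lem2}, which also gives the quantitative bound $L_1^{PT}\leq \|P\|L_1^T$. Part (b) (whose proof really is the verification of co-uniform continuity for $PT$ at large distances) gives $c_d^{PT}\geq c_d^T$ for every $d>0$, so in particular $PT$ is co-Lipschitz for large distances, and part (c) yields $C^{PT}\geq C^T$.

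Next, applying Corollary \ref{betaAUS} to $PT$ gives, for every $0<t\leq 1$,
$$\frac 23\, \ov{\beta}_X\!\left(\frac{C^{PT}t}{12\,L_1^{PT}}\right)\leq \ov{\rho}_{Y_0}(t).$$
Now I substitute the estimates from Lemma \ref{lem2}. Since $C^{PT}\geq C^T$ and $L_1^{PT}\leq \|P\|L_1^T$, we have
$$\frac{C^{PT}t}{12\,L_1^{PT}}\ \geq\ \frac{C^Tt}{12\,\|P\|L_1^T}.$$
Both quantities lie in the domain of $\ov{\beta}_X$ (they are bounded by $C^Tt/(12L_1^T)$, which is less than $1$ by the argument at the end of the proof of Theorem \ref{basis}), so the monotonicity of $\ov{\beta}_X$ delivers
$$\frac 23\, \ov{\beta}_X\!\left(\frac{C^Tt}{12\,\|P\|L_1^T}\right)\leq \frac 23\, \ov{\beta}_X\!\left(\frac{C^{PT}t}{12\,L_1^{PT}}\right)\leq \ov{\rho}_{Y_0}(t),$$
which is exactly the desired conclusion.

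I do not expect any real obstacle here: the content is entirely packaged by Lemma \ref{lem2} and Corollary \ref{betaAUS}. The only point worth being careful about is confirming that $PT$ still satisfies all the standing assumptions of Corollary \ref{betaAUS} on the same domain $S$, so that the modulus $\ov{\rho}_{Y_0}$ (and not $\ov{\rho}_Y$) is what appears on the right-hand side.
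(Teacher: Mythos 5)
Your proposal is correct and follows exactly the paper's route: the paper's proof is the one-line ``Apply Corollary \ref{betaAUS} to the map $PT$,'' and you have simply made explicit the verification of the hypotheses and the substitution of $C^{PT}\geq C^T$, $L_1^{PT}\leq \|P\|L_1^T$ via the monotonicity of $\ov{\beta}_X$, which is precisely what Lemma \ref{lem2} is set up to provide.
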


    \begin{proof}
    Apply Corollary \ref{betaAUS}  to the map $PT$.
    \end{proof}

\indent
    As an immediate consequence we obtain the following.

\begin{cor}{\label{cor2}} Let $(q_n)$ be a sequence in $[1,\infty)$ such that $\lim q_n=\infty$. Suppose that the Banach space $X$ has an equivalent norm with property $(\beta)$.  Then there is no uniform quotient map from any subset of $X$ onto $Y=\left(\displaystyle \sum_{n=1}^{\infty} \ell_{q_n}\right)_{\ell_2}$ that is Lipschitz for large distances.
\end{cor}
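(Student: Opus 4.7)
My plan is to argue by contradiction, feeding the hypothetical uniform quotient onto each $\ell_2$-summand via a norm-one projection and then exploiting the fact that the asymptotic smoothness modulus of $\ell_{q_n}$ decays to $0$ as $q_n \to \infty$, in conflict with property~$(\beta)$.

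First I would reduce to the case where $X$ itself has property $(\beta)$. Being a uniform quotient map that is Lipschitz for large distances is preserved (with altered constants) under passing to an equivalent norm on the source space, so nothing is lost by renorming $X$ with the norm furnished by the hypothesis. Under the new norm, the constants $C^T > 0$ and $L_1^T < \infty$ associated with the map $T$ remain positive and finite (the former because $c_d > 0$ for every $d > 0$, the latter by the assumption of Lipschitz behavior for large distances together with $C^T \leq L_1^T$ as recalled in Section~\ref{steve1}).

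Next, for each $n$, let $P_n \colon Y \to \ell_{q_n}$ be the canonical coordinate projection. Since $Y$ is the $\ell_2$-sum of the $\ell_{q_n}$, we have $\|P_n\| = 1$, and each $\ell_{q_n}$ is infinite-dimensional. Applying Corollary~\ref{cor1} to $P_n \circ T$ gives, for every $t \in (0,1]$ and every $n$,
$$\frac{2}{3}\, \ov{\beta}_X\!\left( \frac{C^T t}{12 L_1^T} \right) \leq \ov{\rho}_{\ell_{q_n}}(t).$$
Here the argument $\frac{C^T t}{12 L_1^T}$ is independent of $n$, lies in $(0,1]$, and is therefore in the domain of $\ov{\beta}_X$, exactly as verified at the end of the proof of Theorem~\ref{basis}.

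Finally I would let $n \to \infty$. From the formula recalled in Section~2, $\ov{\rho}_{\ell_{q_n}}(t) = (1+t^{q_n})^{1/q_n} - 1$ for $q_n > 1$, and this tends to $0$ for any fixed $t \in (0,1]$ as $q_n \to \infty$. Taking $t = 1$ (say), the displayed inequality then forces $\ov{\beta}_X\!\left(\frac{C^T}{12 L_1^T}\right) = 0$, in direct contradiction with the fact that $X$ has property~$(\beta)$, which requires $\ov{\beta}_X(s) > 0$ for every positive $s$ in its domain. The only real subtlety is the bookkeeping behind the renorming reduction and checking that the argument of $\ov{\beta}_X$ stays in its domain; beyond that, the result is an immediate consequence of Corollary~\ref{cor1} combined with the well-known $\ell_p$ formula.
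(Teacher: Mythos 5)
Your proposal is correct and is exactly the argument the paper intends when it calls the corollary an ``immediate consequence'' of Corollary~\ref{cor1}: apply that corollary to the norm-one coordinate projections $P_n$, use $\ov{\rho}_{\ell_{q_n}}(t)=(1+t^{q_n})^{1/q_n}-1\to 0$, and contradict $\ov{\beta}_X>0$. The renorming reduction and the domain check for $\ov{\beta}_X$ are handled appropriately.
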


 \begin{rmk} In \cite{LimaRandrianarivony2010} it was proved that $c_0$ cannot be a uniform quotient of (or a Lipschitz quotient of a subset of) a Banach space with property $(\beta)$.  Note that a uniformly convex Banach space has property $(\beta)$.  In \cite{BatesJohnsonLindenstraussPreissSchechtman1999} it was shown using the UAAP method that a Banach space that is a uniform quotient of a superreflexive space has to be reflexive.  Hence, both the method in \cite{LimaRandrianarivony2010} and the UAAP method in \cite{BatesJohnsonLindenstraussPreissSchechtman1999} show that $c_0$ cannot be a uniform quotient of a uniformly convex Banach space.  However, there are spaces (see Proposition \ref{prop1} below) with property $(\beta)$ that are not superreflexive. So the UAAP method cannot be used in the same way as in \cite{BatesJohnsonLindenstraussPreissSchechtman1999} to show that $Y$ cannot be a uniform quotient of a Banach space with property $(\beta)$.
 \end{rmk}

\indent The following result is a complement to Corollary \ref{cor2}.

\begin{cor}
Suppose $X$ is isomorphic to a Banach space with property $(\beta)$.  Then there is no uniform quotient map that is Lipschitz at large distances from a subset of $X$ onto $Y=\left(\displaystyle \sum_{n\geq 1} \ell_{p_n} \right)_{\ell_2}$ if $p_n \downarrow 1$ as $n\to \infty$.
\end{cor}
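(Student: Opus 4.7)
The plan is to adapt the strategy used in Corollary \ref{cor2}. Suppose toward a contradiction that $T : S \to Y$ is such a uniform quotient, with $S \subset X$ and $X$ renormed so that it has property $(\beta)$. For each $n \geq 1$, let $P_n : Y \to \ell_{p_n}$ denote the natural norm-one projection onto the $n$-th summand. Applying Corollary \ref{cor1} to the composition $P_n \circ T : S \to \ell_{p_n}$ gives, for every $n$ and every $t \in (0,1]$,
\[
\tfrac{2}{3}\,\ov{\beta}_X\!\left(\tfrac{C^T t}{12 L_1^T}\right) \leq \ov{\rho}_{\ell_{p_n}}(t) = (1+t^{p_n})^{1/p_n} - 1.
\]

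The main obstacle, compared with Corollary \ref{cor2}, is that the right-hand side tends merely to $t$ (rather than $0$) as $p_n \downarrow 1$, so this family of inequalities is not immediately contradictory with the positivity of $\ov{\beta}_X$ on $(0, a]$. To close the gap, I plan to fix $s_0 \in (0, a]$ at which $\ov{\beta}_X(s_0) > 0$ (furnished by property $(\beta)$) and set $t_0 := 12\, L_1^T s_0 / C^T$, shrinking $s_0$ as needed so that $t_0 \leq 1$; the inequality above then forces
\[
(1+t_0^{p_n})^{1/p_n} - 1 \geq \tfrac{2}{3}\,\ov{\beta}_X(s_0) > 0
\]
for every $n$. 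Combining this family (indexed by $n$, so that $p_n \downarrow 1$) with the isomorphic characterization of property $(\beta)$ furnished by Proposition \ref{prop1} (which provides a quantitative lower bound on $\ov{\beta}_X$ in a suitable equivalent renorming of $X$) should then produce the contradiction.

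The technical heart of the argument lies in this last step: no single member of the family of inequalities above is, by itself, incompatible with property $(\beta)$, so the contradiction must be extracted from the joint behavior of the whole family as $p_n \downarrow 1$, together with structural information on $(\beta)$-renormings that goes beyond the mere positivity of $\ov{\beta}_X$. This joint asymptotic comparison is the step I expect to be most delicate, since it requires balancing the sub-linear (but arbitrarily close to linear) decay of $(1+t_0^{p_n})^{1/p_n} - 1$ against the lower bound on $\ov{\beta}_X$ coming from the renorming theory.
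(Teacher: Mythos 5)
There is a genuine gap, and in fact the route you propose cannot be repaired. The decisive problem is the one you yourself flag: applying Corollary \ref{cor1} to $P_n\circ T$ yields
\[
\tfrac{2}{3}\,\ov{\beta}_X\!\left(\tfrac{C^T t}{12 L_1^T}\right)\;\le\;\ov{\rho}_{\ell_{p_n}}(t)=(1+t^{p_n})^{1/p_n}-1,
\]
and the right-hand side is \emph{increasing} as $p_n$ decreases. So the inequality for the summand $\ell_{p_n}$ is \emph{weaker} than the one for $\ell_{p_1}$, and the entire family carries no more information than the single inequality obtained from $P_1\circ T$. That single inequality is perfectly consistent with $X$ having property $(\beta)$: for instance $\ov{\beta}_{\ell_2}(s)\sim s^2/16$ while the right-hand side is of order $t^{p_1}$ with argument $s=C^Tt/(12L_1^T)\le t/12$, so no choice of $s_0$ (large or small) produces a contradiction, and Proposition \ref{prop1} cannot help because it computes $\ov{\beta}$ only for $\ell_p$-sums of finite-dimensional spaces (which a general $(\beta)$-renormable $X$ need not be) and in any case only yields bounds of order $t^p$ with $p\ge 1$, far below the roughly linear lower bound $\tfrac{3}{2}t_0\ge 18 s_0$ your limiting inequality would require. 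The structural reason the projection method fails here is that letting $p_n\downarrow 1$ makes the summands asymptotically \emph{less} smooth, which is the wrong direction: the method of Corollary \ref{cor2} works because $q_n\to\infty$ forces $\ov{\rho}_{\ell_{q_n}}(t)\to 0$. (By contrast, for $\bigl(\sum L_{p_n}\bigr)_{\ell_2}$ a projection argument does work, because Lemma \ref{lem3} produces in $L_{p_n}$, around any unit vector, a normalized basic sequence with $\|x\pm e_k\|\le 2^{1-1/p_n}+\eta$ and $2^{1-1/p_n}\to 1$; there is no analogue of this in $\ell_{p_n}$, where the best bound at $t_0=1$ is $2^{1/p_n}\to 2$.)

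The paper's proof uses an entirely different idea. By Ribe's theorem, $Y=\bigl(\sum\ell_{p_n}\bigr)_{\ell_2}$ with $p_n\downarrow 1$ is uniformly homeomorphic to $Z=Y\oplus\ell_1$, via a map $V$ which is Lipschitz for large distances; thus $V\circ T$ is a uniform quotient of $S$ onto $Z$, Lipschitz for large distances. Since $\ell_1$ is complemented in $Z$ and $c_0$ is a linear quotient of $\ell_1$, there is a bounded linear quotient $Q:Z\to c_0$, and $Q\circ V\circ T$ is then a uniform quotient of $S$ onto $c_0$, Lipschitz for large distances. Corollary \ref{betaAUS} applied to this map gives $\tfrac23\,\ov{\beta}_X\bigl(C t/(12L)\bigr)\le\ov{\rho}_{c_0}(t)=0$, contradicting property $(\beta)$. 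So the missing ingredient in your argument is precisely this detour through Ribe's theorem and the linear quotient onto $c_0$; some input beyond the summand structure of $Y$ is unavoidable.
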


\begin{proof} Suppose that $U$ is a uniform quotient from a subset $S$ of $X$ onto $Y$ which is Lipschitz for large distances. M. Ribe proved in \cite{Ribe1984} that there is a uniform homeomorphism $V$ from $Y$ onto $Z=Y\oplus \ell_1$. Note that $V$ is Lipschitz for large distances. Then $VU$ is a uniform quotient from $S$ onto $Z$ that is Lipschitz for large distances. Since $\ell_1$ is complemented in $Z$, there is a linear quotient $Q$ from $Z$ onto $c_0$. Finally $QVU$ is a uniform quotient from $S$ onto $c_0$ that is Lipschitz for large distances. This is in contradiction with the fact that $X$ has property $(\beta)$ (see Corollary \ref{betaAUS}).

\end{proof}

Note that $\left(\displaystyle \sum \ell_{p_n} \right)_{\ell_2}$ is one-complemented in $\left(\displaystyle \sum L_{p_n} \right)_{\ell_2}$. So it follows trivially from the preceding result that there is no uniform quotient that is Lipschitz for large distances from a subset of a Banach space with property $(\beta)$ onto $\left(\displaystyle \sum L_{p_n} \right)_{\ell_2}$ if $p_n \downarrow 1$ as $n\to \infty$. Let us however indicate an alternate simple proof of this fact which does not use Ribe's deep theorem.

\begin{proof} We start with the following lemma.

\begin{lem}{\label{lem3}}
Let $1<p<\infty$. Then for all $x\in L_p=L_p[0,1]$ with $\|x\|_p=1$, and for all $\eta>0$, there exists a monotone basic sequence $(e_n)_{n\geq 1}$ such that $\|x\pm e_n\|_p\leq 2^{1-\frac 1p}+\eta$.
\end{lem}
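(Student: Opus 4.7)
The plan is to build $(e_n)$ explicitly as $e_n = \tilde{x} \cdot r_{n+m}$, where $\tilde x$ is a dyadic step-function approximation of $x$ of norm $1$ and $r_k$ denotes the $k$-th Rademacher function on $[0,1]$. The motivating special case is $x = \mathbf{1}_{[0,1]}$ with $e = \mathbf{1}_{[0,1/2]} - \mathbf{1}_{[1/2,1]}$: there $x+e = 2\mathbf{1}_{[0,1/2]}$ and $x-e = 2\mathbf{1}_{[1/2,1]}$, giving $\|x \pm e\|_p = 2^{1-1/p}$ exactly. The whole construction is just this special case multiplied pointwise into $x$.

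First, using the density of dyadic simple functions in $L_p$ followed by a small renormalization, I would fix $m$ and choose $\tilde x = \sum_{k=1}^{2^m} a_k \mathbf{1}_{I_k^m}$, where $I_k^m := [(k-1)/2^m, k/2^m]$, with $\|\tilde x\|_p = 1$ and $\|x - \tilde x\|_p < \eta$. For $n \geq 1$, define $e_n := \tilde x \cdot r_{n+m}$. Since $|r_{n+m}| \equiv 1$, automatically $\|e_n\|_p = \|\tilde x\|_p = 1$.

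Next, I would compute $\|\tilde x \pm e_n\|_p$ exactly. Because $\tilde x + e_n = \tilde x\,(1 + r_{n+m})$ equals $2\tilde x$ on $\{r_{n+m} = +1\}$ and $0$ on $\{r_{n+m} = -1\}$,
$$\|\tilde x + e_n\|_p^p = 2^p \int_{\{r_{n+m}=+1\}} |\tilde x|^p.$$
As $\tilde x$ is constant on each $I_k^m$ while $r_{n+m}$ takes the value $+1$ on exactly half the measure of each $I_k^m$ (since $n \geq 1$), the integral equals $\tfrac12 \|\tilde x\|_p^p = \tfrac12$. Hence $\|\tilde x \pm e_n\|_p = 2^{1-1/p}$, and the triangle inequality yields $\|x \pm e_n\|_p \leq 2^{1-1/p} + \eta$.

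Finally, I would verify that $(e_n)_{n\geq 1}$ is \emph{monotone} basic by exhibiting it as a block basis of the Haar system of $L_p$. Writing $r_{n+m} = \sum_{j=1}^{2^{n+m-1}} h_{n+m-1,j}$ as a sum of (unnormalized) Haar functions at level $n+m-1$, and using that $\tilde x$ is constant on the coarser level-$m$ dyadic intervals, each $e_n$ lies in $\linspan\{h_{n+m-1,j}\}_{j}$. Distinct $e_n$'s thus use disjoint Haar levels, so in the natural Haar ordering $(e_n)$ is a block basis. Since the Haar system is a monotone Schauder basis of $L_p$ for $1 < p < \infty$, every block basis of it is itself monotone. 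This is where the main subtlety lies: a generic Bessaga--Pe{\l}czy\'nski extraction would produce only basis constant $1+\varepsilon$, but the block-Haar structure delivers monotonicity (basis constant exactly $1$) directly.
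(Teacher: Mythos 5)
Your proof is correct and follows essentially the same route as the paper's: approximate $x$ by a finite dyadic (Haar) expansion, multiply by high-index Rademacher functions, and compute $\|\tilde x(1\pm r_i)\|_p=2^{1-1/p}$ exactly. The only difference is that you spell out why $(\tilde x\, r_i)_i$ is monotone basic (as a level-by-level block basis of the monotone Haar system), a point the paper simply asserts.
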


\begin{proof} It is enough to consider the vectors of the form $x=\displaystyle \sum_{i=0}^Na_ih_i$ with $\|x\|_p=1$, where $(h_i)_{i\geq 0}$ is the Haar basis, which are dense in the unit sphere of $L_p$.  Let $(r_n)_{n\geq 1}$ be the Rademacher sequence.  Then for $i\geq N+1$ we have
$$\|x(1\pm r_i)\|_p=2^{1-\frac 1p}\|x\|_p=2^{1-\frac 1p},$$
    and $(xr_i)_{i\geq N+1}$ is a monotone basic sequence.
    \end{proof}
Assume now that $T$ is a uniform quotient from a subset $S$ of a Banach space $X$ with property $(\beta)$ onto $Y=\left(\displaystyle \sum L_{p_n} \right)_{\ell_2}$ which is Lipschitz for large distances. We conclude our alternate proof by applying Theorem \ref{basis} to the maps $P_nT$, where $P_n$ is the natural projection from $Y$ onto $L_{p_n}$, and using the fact that $2^{1-\frac {1}{p_n}}\to 1$ as $n\to\infty$.
    \end{proof}

\section{The beta modulus of an $\ell_p$-sum of finite-dimensional spaces}

We begin with an extension of the computation of $\ov{\beta}_{\ell_p}$ that was performed in \cite{AyerbeDominguez_BenavidesCutillas1994}.

\begin{prop}{\label{prop1}}
Let $1\leq p<\infty$ and let $X=\left(\displaystyle \sum_{k=1}^{\infty} E_k\right)_{\ell_p}$, where $(E_k)_{k\geq 1}$ is a sequence of finite-dimensional spaces.  Then for $0\leq t \leq 2^{1/p}$ we have:
$$\ov{\beta}_X(t)=1-\frac 12 \left(\left(1+\left(1-\frac{t^p}{2}\right)^{1/p}\right)^p+\frac{t^p}{2}\right)^{1/p}.$$
\end{prop}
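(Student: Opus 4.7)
The plan is to establish the formula by matching an explicit extremal pair against a universal upper bound on $\inf_n\|x-x_n\|$. First I would set $a=t/2^{1/p}$ and $b=(1-t^p/2)^{1/p}$, noting $a^p+b^p=1$, and pick unit vectors $e_0\in E_1$ and $e_n\in E_{n+1}$ for each $n\ge 1$. Taking $x=e_0$ and $x_n=-be_0+ae_n$, the disjointness of supports in the $\ell_p$-sum immediately gives $\|x_n\|=1$, $\|x_n-x_m\|^p=2a^p=t^p$ for $n\ne m$, and $\|x-x_n\|^p=(1+b)^p+a^p$. This already forces $\ov\beta_X(t)\le 1-\tfrac12\bigl((1+(1-t^p/2)^{1/p})^p+t^p/2\bigr)^{1/p}$.

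For the reverse inequality I would fix $\eps>0$, an arbitrary $x\in B_X$, and a $t$-separated sequence $(x_n)\subset B_X$. The crucial input is finite-dimensionality of each $E_k$: letting $P_N$ be the natural projection onto $\bigoplus_{k\le N}E_k$, a diagonal extraction (using norm compactness of $P_N(B_X)$) yields a subsequence along which $P_Nx_n$ converges in norm for every $N$, and the coordinate limits assemble into some $x_0\in B_X$ with $x_n\to x_0$ coordinatewise. Writing $u_n=x_n-x_0$, $b=\|x_0\|$, and, after a further extraction, $a=\lim_n\|u_n\|$, a routine disjointification lets me assume, up to a perturbation of total norm $<\eps$, that each $u_n$ is supported in a block $\bigoplus_{N_{n-1}<k\le N_n}E_k$ disjoint from the essential support of $x_0$ and from all previous $u_m$.

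Three limits then follow from the $\ell_p$-block identity: $\|x_n\|^p\to b^p+a^p$, so $a^p+b^p\le 1$; $\|x_n-x_m\|^p\to 2a^p$ as $n,m\to\infty$ with $n\neq m$, so $a^p\ge t^p/2$; and $\|x-x_n\|^p\to\|x-x_0\|^p+a^p\le(1+b)^p+a^p$. Hence $\inf_n\|x-x_n\|^p\le (1+b)^p+a^p+O(\eps)$. A two-variable real optimization completes the proof: under the constraints $a^p+b^p\le 1$ and $a^p\ge t^p/2$, monotonicity in $a$ forces $a^p=1-b^p$; the resulting one-variable function $(1+b)^p+1-b^p$ is nondecreasing in $b$ for $p\ge 1$ since $(1+b)^{p-1}\ge b^{p-1}$, so its maximum on $b\in[0,(1-t^p/2)^{1/p}]$ is attained at the right endpoint, yielding exactly $(1+(1-t^p/2)^{1/p})^p+t^p/2$. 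Letting $\eps\to 0$ finishes the matching bound.

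The main technical obstacle is making the three limit identities rigorous. They rest on the additive behavior $\|v+u_n\|^p\to\|v\|^p+\lim\|u_n\|^p$ for coordinatewise null $u_n$, and this is precisely where finite-dimensionality of the $E_k$'s is essential: it provides the norm compactness of $P_N(B_X)$ that drives both the extraction of $x_0$ and the disjointification of the $u_n$'s. The remainder is an elementary optimization.
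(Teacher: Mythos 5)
Your proof is correct, and for the substantive inequality $\ov{\beta}_X(t)\ge 1-\frac12\bigl(\cdots\bigr)^{1/p}$ it follows essentially the same route as the paper: extract a coordinatewise limit $x_0$ using the finite-dimensionality of the $E_k$'s, disjointify $u_n=x_n-x_0$ against the FDD, invoke $\ell_p$-additivity to get $a^p+b^p\le 1$ and $a^p\ge t^p/2$, bound $\|x-x_0\|\le 1+b$, and then run exactly the one-variable monotonicity check $(1+b)^{p-1}\ge b^{p-1}$ that the paper also uses (there phrased as monotonicity in the separation parameter $\alpha\ge t$). Where you genuinely differ is the reverse inequality: the paper embeds $\ell_p$ isometrically into $X$ and quotes the computation of $\ov{\beta}_{\ell_p}$ from Ayerbe--Dom\'inguez Benavides--Cutillas for $1<p<\infty$, only giving an explicit witness in the case $p=1$; you instead exhibit the extremal pair $x=e_0$, $x_n=-be_0+ae_n$ with disjointly supported unit vectors, which is exactly $t$-separated and realizes $\|x-x_n\|^p=(1+b)^p+a^p$ for every $n$. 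This makes the proof self-contained and treats all $p\in[1,\infty)$ uniformly (your example is the natural generalization of the paper's $p=1$ witness), at the cost of not recording the connection to the known $\ell_p$ modulus. The only place needing a word of care is your disjointification of the $u_n$'s ``away from the essential support of $x_0$'': since $x_0$ may have infinite support one should truncate $x_0$ to finitely many blocks up to norm $\eps$ before running the gliding hump, but that is the standard argument and your appeal to norm compactness of $P_N(B_X)$ supplies everything needed.
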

    \begin{proof}
    Suppose that $(x_n)$ is a $t$-separated sequence in $B_X$ .  For $k\geq 1$, let $P_k$ be the natural projection of $X$ onto its subspace $E_k$.  By passing to a subsequence we may assume that there exists $y\in B_X$ such that for all $k\geq 1$,
    $$P_k(x_n)\to P_k(y) \text{ as } n\to \infty.$$
    We may also assume that  $(x_n-y)_{n\geq 1}$ is an almost disjoint sequence with respect to the finite-dimensional decomposition $(E_k)$ and that $\lim_{n\to \infty}\|x_n-y\|= 2^{-1/p}\alpha$, with $\alpha\ge t$.  Hence $\|y\|^p\leq 1-\displaystyle \frac{\alpha^p}{2}$.  Then, for any $x\in B_X$
    \begin{equation*}
    \begin{split}
    \limsup \|x_n+x\|^p &=\|y+x\|^p+\limsup\|x_n-y\|^p\\
     &\leq \left(1+\|y\|\right)^p+\frac{\alpha^p}{2}\\
     &\leq \left(1+\left(1-\frac{\alpha^p}{2}\right)^{1/p}\right)^p+\frac{\alpha^p}{2}\\
     &\leq \left(1+\left(1-\frac{t^p}{2}\right)^{1/p}\right)^p+\frac{t^p}{2}.\\
    \end{split}
    \end{equation*}
    Hence
$$\ov{\beta}_X(t)\geq1-\frac 12 \left(\left(1+\left(1-\frac{t^p}{2}\right)^{1/p}\right)^p+\frac{t^p}{2}\right)^{1/p}.$$

    On the other hand, $\ell_p$ embeds isometrically into $X$, so $\ov{\beta}_X(t) \leq \ov{\beta}_{\ell_p}(t)$, which gives the reverse inequality.  (Note that the computation of $\ov{\beta}_{\ell_p}(t)$ was done in \cite{AyerbeDominguez_BenavidesCutillas1994} for the case $1<p<\infty$.  The case $p=1$ is easily checked by considering the vectors $x=e_1$ and $x_n=\left(1-\frac{t}{2}\right)e_1+\frac{t}{2}e_n$, where $(e_n)_n$ is the unit vector basis of $\ell_1$.)
    \end{proof}

\indent

As a consequence of this computation, we note that the result \cite[Theorem 4.1]{LimaRandrianarivony2010} can be stated in terms of sums of finite-dimensional spaces.  Actually, we have the following.

\begin{cor}{\label{g_thm2}}
Let $X$ be a quotient of a subspace of an $\ell_p$-sum of finite-dimensional spaces, where $1<p<\infty$.  Assume a Banach space $Y$ is a uniform quotient of a subset of $X$, where the uniform quotient map is Lipschitz for large distances.  Then $Y$ cannot contain any subspace isomorphic to $\ell_q$ for any $q>p$.
\end{cor}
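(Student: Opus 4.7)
The plan is a proof by contradiction, combining Corollary \ref{betaAUS} with Proposition \ref{prop1}. Suppose some $\ell_q$ with $q>p$ embeds isomorphically into $Y$ via a linear isomorphism $J:\ell_q\tto Y_0\subset Y$. I will manufacture a uniform quotient map from a subset of $Z:=\left(\sum_{k\geq 1}E_k\right)_{\ell_p}$ onto $\ell_q$ that is Lipschitz for large distances, and then compare the asymptotic behaviors of $\ov\beta_Z$ and $\ov\rho_{\ell_q}$ near $0$ to reach the contradiction $q\leq p$.

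Write $X=W/N$ with $W$ a closed subspace of $Z$, and let $\pi:W\tto X$ be the quotient map. The construction proceeds in three stages. Stage 1: $T\circ\pi:\pi^{-1}(S)\tto Y$ is a uniform quotient from a subset of $W\subset Z$ onto $Y$, Lipschitz for large distances. Uniform continuity and the Lipschitz-for-large-distances property come from $\|\pi\|\leq 1$; co-uniform continuity, with $C^{T\circ\pi}\geq C^T$ up to an arbitrarily small loss, follows from the openness of $\pi$ (namely $\pi(B_W(w,r))\supset B_X(\pi(w),r')$ for every $r'<r$). Stage 2: set $\widetilde S:=(T\circ\pi)^{-1}(Y_0)$ and restrict the range to $Y_0$. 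The restriction is still a uniform quotient onto $Y_0$ (any $w'\in\pi^{-1}(S)$ with $T\pi(w')\in Y_0$ automatically lies in $\widetilde S$) and remains Lipschitz for large distances, with the same constants. Stage 3: compose with $J^{-1}$ to obtain $\widetilde T:=J^{-1}\circ (T\circ\pi)|_{\widetilde S}:\widetilde S\tto\ell_q$, a uniform quotient map Lipschitz for large distances with $L_1^{\widetilde T}\leq \|J^{-1}\|\,L_1^{T\circ\pi}$ and $C^{\widetilde T}\geq C^{T\circ\pi}/\|J\|$ (the latter by the same bookkeeping as in Lemma \ref{lem2}(b)).

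Applying Corollary \ref{betaAUS} to $\widetilde T$, viewed as a uniform quotient from $\widetilde S\subset Z$ onto $\ell_q$, yields
\[
\tfrac{2}{3}\,\ov\beta_Z\!\left(\tfrac{C^{\widetilde T}\,t}{12\,L_1^{\widetilde T}}\right)\leq \ov\rho_{\ell_q}(t),\qquad 0<t\leq 1.
\]
From the explicit formula in Proposition \ref{prop1}, a Taylor expansion at $t=0$ gives $\ov\beta_Z(t)\sim K_p\,t^p$ as $t\tto 0^+$, with $K_p=(2^{p-1}-1)/(p\cdot 2^{p+1})>0$ for $1<p<\infty$, while $\ov\rho_{\ell_q}(t)=(1+t^q)^{1/q}-1\sim t^q/q$. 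Plugging these into the displayed inequality forces $t^{q-p}$ to stay bounded below by a positive constant as $t\tto 0^+$, which contradicts $q>p$.

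The principal obstacle is the technical bookkeeping in the three-stage construction of $\widetilde T$: verifying that each step preserves the status of being a uniform quotient, maintains the Lipschitz-for-large-distances property, and keeps $C^{\widetilde T}$ positive and $L_1^{\widetilde T}$ finite. Once that is done, the final contradiction reduces to a routine comparison of power laws as $t\tto 0^+$.
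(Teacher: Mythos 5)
Your proposal is correct and follows essentially the same route as the paper: both transfer the hypothetical copy of $\ell_q$ back to a uniform quotient map (Lipschitz for large distances) from a subset of the $\ell_p$-sum onto $\ell_q$ by composing with the linear quotient map on the domain side and the isomorphism $J$ on the range side, and then invoke Corollary \ref{betaAUS} together with Proposition \ref{prop1}. The only difference is cosmetic (order of the restriction and composition steps, and your explicit power-type expansion $\ov{\beta}_Z(t)\sim K_pt^p$ versus $\ov{\rho}_{\ell_q}(t)\sim t^q/q$, which the paper leaves implicit in ``to get a contradiction'').
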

    \begin{proof} 

Let $T$ be a uniform quotient map from a subset $S$ of $X$ onto $Y$ that is Lipschitz for large distances (and co-Lipschitz for large distances).  Let $Z$ be a subspace of $Y$ that is isomorphic to $\ell_q$, and call $J:Z\tto \ell_q$ the linear isomorphism.  Consider the further subset $S':=T^{-1}(Z)$, and consider the restriction map $T':=T_{|_{S'}}:S'\tto Z$.  Then $T'$ is a uniform quotient map from $S'$ onto $Z$, and as a restriction map it inherits the Lipschitz for large distances and co-Lipschitz for large distances properties.  Now the composition $J\circ T'$ is a uniform quotient map from $S'$ onto $\ell_q$ that is Lipschitz for large distances (and co-Lipschitz for large distances).

On the other hand, let $Q:W\tto X$ be a linear quotient map, where $W$ is a subspace of an $\ell_p$-sum of finite-dimensional spaces.  Repeating the previous argument, we note that $Q':=Q_{|_{Q^{-1}(S')}}\tto S'$ is a surjective Lipschitz quotient map.

Finally, we note that the composition $J\circ T'\circ Q'$ is a uniform quotient map from the subset $Q^{-1}(S')$ of an $\ell_p$-sum of finite-dimensional spaces onto $\ell_q$ that is both Lipschitz for large distances and co-Lipschitz for large distances.  We apply Corollary  \ref{betaAUS}  and Proposition \ref{prop1} to get a contradiction.



We thank the anonymous referee for helping us simplify the proof of this corollary.

\end{proof}

Let us now treat a brief example. We recall that a function $F: [0,\infty)  \to [0,\infty)$ is an \emph{Orlicz function} if it is continuous, non decreasing, convex and such that $F(0)=0$ and $\lim_{t\to \infty}F(t)=\infty$. Then the space $\ell_F$ is the space of all real sequences $x=(x_n)_{n=1}^\infty$ such that there exists $r>0$ satisfying
$$\sum_{n=1}^\infty F\left(\frac{|x_n|}{r}\right) <\infty.$$
It is equipped with the Luxemburg norm
$$\forall x\in \ell_F\ \ \|x\|_F=\inf \left\{r>0,\ \sum_{n=1}^\infty F\left(\frac{|x_n|}{r}\right)\le 1\right\}.$$
Then the Orlicz space $h_F$ is the closure in $\ell_F$ of the finitely supported sequences. The $\ell_p$ spaces that are isomorphically contained in $h_F$ can be precisely described with the Matuszewska-Orlicz indices (also called Boyd indices) and defined as follows:
$$\alpha_F=\sup \left\{q,\ \sup_{0<u,v\le1}\frac{F(uv)}{u^qF(v)}<\infty\right\}\ \ {\rm and}\ \
\beta_F=\inf \left\{q,\ \inf_{0<u,v\le1}\frac{F(uv)}{u^qF(v)}>0\right\}.$$
J. Lindenstrauss and L. Tzafriri (see \cite[page 143]{LindenstraussTzafriri1977}) proved that $h_F$ isomorphically contains $\ell_p$ (or $c_0$ if $p=\infty$) if and only if $p\in [\alpha_F,\beta_F]$. Then, the following is an immediate consequence of Corollary \ref{g_thm2}.

\begin{cor}
Let $X$ be a subspace of a quotient of an $\ell_p$-sum of finite-dimensional spaces with $1<p<\infty$ and let $F$ be an Orlicz function.  Assume that $h_F$ is a uniform quotient of (or a Lipschitz quotient of a subset of) $X$.  Then $\beta_F\le p$.
\end{cor}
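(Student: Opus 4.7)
The plan is to obtain this as an immediate consequence of Corollary \ref{g_thm2} combined with the Lindenstrauss-Tzafriri theorem recalled just above the statement, namely that $h_F$ isomorphically contains $\ell_q$ precisely when $q\in[\alpha_F,\beta_F]$. I would argue by contradiction.

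Suppose $\beta_F>p$. Since $\alpha_F\le\beta_F$, the interval $[\alpha_F,\beta_F]$ contains some finite value $q$ with $q>p$ (for instance $q=\max(\alpha_F,\,\min(\beta_F,\,p+1))$). By the Lindenstrauss-Tzafriri theorem, $h_F$ then contains a subspace isomorphic to $\ell_q$. On the other hand, by hypothesis $X$ is a subspace of some quotient $Z/V$ with $Z=\left(\sum_k E_k\right)_{\ell_p}$. Since $Z/V$ is trivially a quotient of a subspace of $Z$ (take the subspace to be all of $Z$), and since the subset $S\subset X$ on which the uniform quotient map is defined is \emph{a fortiori} a subset of $Z/V$, Corollary \ref{g_thm2} applies with ambient space $Z/V$ and target $h_F$ --- any uniformly continuous map between Banach spaces being automatically Lipschitz for large distances by the opening remarks of Section \ref{steve1} --- and it rules out any $\ell_q$-subspace of $h_F$ for $q>p$. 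This contradicts the previous sentence, so $\beta_F\le p$.

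The argument is essentially a two-line reduction, and I do not anticipate any real obstacle: the authors themselves present this as an immediate consequence. The only bookkeeping point worth flagging is the cosmetic discrepancy between ``subspace of a quotient'' in the present hypothesis and ``quotient of a subspace'' in Corollary \ref{g_thm2}; this is resolved by the remark above, namely that one may replace $X$ by the ambient $Z/V$ and then view $Z$ as a (trivial) subspace of itself. Once this is in place, the conclusion follows directly from Corollary \ref{g_thm2} applied to the uniform quotient $T:S\to h_F$ and the copy of $\ell_q$ inside $h_F$ supplied by Lindenstrauss-Tzafriri.
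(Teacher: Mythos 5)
Your proposal is correct and is essentially the paper's own (unwritten) argument: the paper simply declares the statement ``an immediate consequence of Corollary \ref{g_thm2}'' together with the Lindenstrauss--Tzafriri characterization, and your reduction --- enlarging $X$ to the ambient quotient $Z/V$, which is trivially a quotient of a subspace of the $\ell_p$-sum, and then extracting a copy of $\ell_q$ with $p<q<\infty$ from $[\alpha_F,\beta_F]$ --- is exactly that reduction, with the ``subspace of a quotient'' versus ``quotient of a subspace'' point handled correctly (note also that a subspace of a quotient is itself a quotient of a subspace, via the preimage under the quotient map, so one could even apply Corollary \ref{g_thm2} to $X$ directly). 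The one caveat, which the paper elides just as you do, is the degenerate case $\alpha_F=\infty$, where $[\alpha_F,\beta_F]$ contains no finite $q$ and $h_F$ only contains $c_0$; there one must instead invoke Corollary \ref{betaAUS} with $\ov{\rho}_{c_0}\equiv 0$ (and also note, for the ``uniform quotient'' alternative, that Lipschitzness for large distances is automatic because the domain is all of $X$, not because uniform continuity alone ever implies it on an arbitrary subset).
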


\section{Isomorphic results}

In this section, we explore further consequences of Theorem \ref{basis}. In particular, we shall try to express our initial results about asymptotic moduli in terms of the associated isomorphic invariants. It is now well known that the asymptotic uniform smoothness is closely related to the Szlenk index. Let us recall its definition.

Let $X$ be a real Banach space and $K$ a
weak$^*$-compact subset of $X^*$. For $\eps>0$ we let $\mathcal V$ be
the set of all relatively weak$^*$-open subsets $V$ of $K$ such
that the norm diameter of $V$ is less than $\eps$ and
$s_{\eps}K=K\setminus \cup\{V:V\in\mathcal V\}.$ Then we define
inductively $s_{\eps}^{\alpha}K$ for any ordinal $\alpha$ by
$s^{\alpha+1}_{\eps}K=s_{\eps}(s_{\eps}^{\alpha}K)$ and
$s^{\alpha}_{\eps}K={\displaystyle
\cap_{\beta<\alpha}}s_{\eps}^{\beta}K$ if $\alpha$ is a limit
ordinal. We then define $\text{Sz}(X,\eps)$ to be the least ordinal $\alpha$
so that $s_{\eps}^{\alpha}B_{X^*}=\emptyset,$ if such an ordinal
exists. Otherwise we write $\text{Sz}(X,\eps)=\infty.$ The \emph{
Szlenk index} of $X$ is finally defined by
$\text{Sz}(X)=\sup_{\eps>0}\text{Sz}(X,\eps)$. In the sequel
$\omega$ will denote the first infinite ordinal.

The seminal result on AUS renormings is due to H. Knaust, E. Odell
and T. Schlumprecht (\cite{KnaustOdellSchlumprecht1999}). Among other things, they proved the following.

\begin{thm}\label{KOS} {\bf (Knaust-Odell-Schlumprecht 1999)}  Let $X$ be a separable Banach space. The following assertions are
equivalent.

(i) $X$ admits an equivalent AUS norm with a power type modulus.

(ii) Sz$(X)\leq \omega$.

(iii) There exist $C>0$ and $p\in [1,+\infty)$ such that: $\forall
\eps>0$ Sz$(X,\eps)\leq C\eps^{-p}$.
\end{thm}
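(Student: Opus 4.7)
The plan is to prove the equivalences via the cycle (i)$\Rightarrow$(iii)$\Rightarrow$(ii)$\Rightarrow$(iii)$\Rightarrow$(i). Of these, (iii)$\Rightarrow$(ii) is immediate: if Sz$(X,\eps)\leq C\eps^{-p}<\infty$ for every $\eps>0$, then Sz$(X)=\sup_{\eps>0}$Sz$(X,\eps)\leq\omega$.

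For (i)$\Rightarrow$(iii) I would dualize the AUS estimate $\ov{\rho}_X(t)\leq Kt^p$ into a weak$^*$ asymptotic uniform convexity of $X^*$: for every $x^*\in B_{X^*}$ and every weak$^*$-null net $(y^*_\alpha)\subset X^*$ with $\|y^*_\alpha\|\geq \eps$, the estimate $\limsup_\alpha\|x^*+y^*_\alpha\|\geq\|x^*\|+\delta(\eps)$ holds with $\delta(\eps)\gtrsim \eps^{p/(p-1)}$ (read as $\delta(\eps)\gtrsim \eps$ in the borderline case $p=1$), via Fenchel--Young duality between $\ov{\rho}_X$ and the weak$^*$ convexity modulus. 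A single Szlenk derivation $s_\eps$ then strictly decreases the sup-norm of the surviving points of $B_{X^*}$ by at least $\delta(\eps/2)$, so at most $\lceil\delta(\eps/2)^{-1}\rceil\lesssim \eps^{-p}$ iterations suffice to empty $B_{X^*}$, yielding Sz$(X,\eps)\leq C'\eps^{-p}$ after adjusting exponents.

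For (ii)$\Rightarrow$(iii), the key ingredient is the submultiplicativity of the Szlenk derivation, namely Sz$(X,\eps\eta)\leq$Sz$(X,\eps)\cdot$Sz$(X,\eta)$ for $\eps,\eta\in(0,1]$. Since Sz$(X)\leq\omega$, each Sz$(X,\eps)$ is a finite integer; setting $N:=$Sz$(X,1/2)$ and iterating gives Sz$(X,2^{-k})\leq N^k$, and monotonicity of Sz$(X,\cdot)$ in $\eps$ then produces the polynomial bound Sz$(X,\eps)\leq N\eps^{-\log_2 N}$.

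The main obstacle is (iii)$\Rightarrow$(i), the genuine Knaust--Odell--Schlumprecht renorming. The strategy is to use the polynomial bound on Sz$(X,\cdot)$ to construct an equivalent weak$^*$-lower-semicontinuous norm on $X^*$ whose weak$^*$-AUC modulus is of power type, then predualize to obtain an AUS norm on $X$ with power-type $\ov{\rho}$. A concrete realization, in the spirit of later work of Godefroy--Kalton--Lancien, is to fix $\eps_n=2^{-n}$, select truncation levels $M_n\lesssim \eps_n^{-p}$ provided by (iii), and define
$$|||x^*|||^q:=\|x^*\|^q+\sum_{n\geq 0}\sum_{k=0}^{M_n}a_{n,k}\,\dist(x^*,s_{\eps_n}^k B_{X^*})^q,$$
with conjugate exponent $q=p/(p-1)$ and weights $a_{n,k}$ tuned so that the sum converges on $B_{X^*}$ and defines an equivalent norm. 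The technical heart lies in verifying that a weak$^*$-null perturbation $y^*$ of $x^*$ with $\|y^*\|\sim s$ increases each term by at most $\sim s^q$; here one exploits that $y^*$ moves $x^*$ across at most polynomially many Szlenk levels at each scale, so the total contribution remains controlled. Translating the resulting weak$^*$-AUC estimate back to the predual then yields $\ov{\rho}(t)\lesssim t^p$ on $X$ in its new norm, closing the cycle.
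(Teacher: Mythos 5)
This theorem is quoted in the paper from Knaust--Odell--Schlumprecht \cite{KnaustOdellSchlumprecht1999} (with \cite{Raja2010} for the nonseparable case) and is not proved there, so I can only measure your proposal against the known proofs. Three of your four arrows are essentially sound. (iii)$\Rightarrow$(ii) is immediate, provided you add the weak$^*$-compactness remark that $s_\eps^{\omega}B_{X^*}=\bigcap_n s_\eps^{n}B_{X^*}=\emptyset$ forces some $s_\eps^{n}B_{X^*}=\emptyset$, so that ``$\text{Sz}(X)\le\omega$'' really means ``each $\text{Sz}(X,\eps)$ is finite.'' (i)$\Rightarrow$(iii) by dualizing $\ov{\rho}_X$ is the standard route, though the exponent you obtain is the conjugate $p^*=p/(p-1)$ rather than $p$ (harmless, since (iii) is existential in $p$), and your ``borderline case $p=1$'' is vacuous because $\ov{\rho}_X(t)\le t$ always holds, so a power-type AUS modulus necessarily has $p>1$. (ii)$\Rightarrow$(iii) via submultiplicativity of $\eps\mapsto\text{Sz}(X,\eps)$ is exactly the Godefroy--Kalton--Lancien argument; just note that the submultiplicativity inequality is itself a nontrivial lemma whose proof uses the convexity of $B_{X^*}$ in an essential way, not a formality you can wave at.

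The genuine gap is (iii)$\Rightarrow$(i), which is the actual content of the theorem, and your sketch does not close it. First, the formula you write down does not define a norm: the derived sets $s_{\eps_n}^{k}B_{X^*}$ are weak$^*$-compact but in general neither convex nor symmetric, so $x^*\mapsto\dist\bigl(x^*,s_{\eps_n}^{k}B_{X^*}\bigr)$ is neither convex nor positively homogeneous, and $|||\cdot|||$ as written is not an equivalent dual norm. One must replace the derived sets by suitable symmetric convex bodies and work with Minkowski functionals, and then the whole difficulty becomes controlling how the Szlenk derivation interacts with convex hulls and with the ball of the new norm --- precisely the step you label ``the technical heart'' and defer. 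Showing that a weak$^*$-null perturbation of size $s$ changes the functional by the correct power of $s$ is where all the work of the theorem lives; without it the cycle (i)$\Rightarrow$(iii)$\Rightarrow$(ii)$\Rightarrow$(iii)$\Rightarrow$(i) is broken at its last link. For the record, the original proof in \cite{KnaustOdellSchlumprecht1999} is different in spirit: it converts the finiteness of the $\eps$-Szlenk indices into uniform upper $\ell_p$-estimates on weakly null trees and then renorms via a blocking argument, while the derived-set renorming you outline is closer to Raja's later approach, which treats the convexification issue with care. As it stands, your write-up establishes the easy implications but leaves the theorem itself unproven.
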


\indent

This was extended by M. Raja in \cite{Raja2010} to the non separable
case.

\indent

Let now $X$ be a Banach space with
$\text{Sz}(X)\leq\omega$ and define the Szlenk power type of $X$
to be $$p_X:=\inf\{q\geq 1,\ \sup_{\eps>0} \eps^q\text{Sz}(X,\eps)<\infty\}.$$

We can now state:

\begin{cor} Let X be a separable Banach space such that for all $t\in [0,1]$, $\ov{\beta}_X(t)\ge ct^p$ (for some $c>0$ and some $p\in (1,\infty)$). Assume that a Banach space $Y$ is a uniform quotient of $X$ (or a Lipschitz quotient of a subset of $X$). Then $p_Y\ge p*$, where $p*$ is the conjugate exponent of $p$.
\end{cor}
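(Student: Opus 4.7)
The plan is to combine Corollary~\ref{betaAUS} applied to \emph{every} equivalent norm on $Y$ with the quantitative form of Theorem~\ref{KOS}, which links the Szlenk power type $p_Y$ and the best achievable AUS renorming modulus of $Y$ via Young conjugation of exponents.

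First, I would observe that the hypotheses of Corollary~\ref{betaAUS} continue to hold when the norm on $Y$ is replaced by any equivalent norm $\|\cdot\|'$: the map $T$ remains a uniform quotient that is Lipschitz for large distances, merely with new numerical constants $L_1^T$ and $C^T$ (depending on the renorming). Substituting the hypothesis $\ov{\beta}_X(t)\ge ct^p$ for $t\in[0,1]$ into the conclusion of Corollary~\ref{betaAUS} therefore produces, for \emph{every} equivalent norm on $Y$, constants $c'>0$ and $t_1\in(0,1]$ (depending on that norm) such that
\[
\ov{\rho}_Y(t)\ge c't^p \qquad\text{for all } t\in(0,t_1].
\]

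Assume now, toward a contradiction, that $p_Y<p^*$. By definition of $p_Y$ there exist $q\in[p_Y,p^*)$ and $C_1>0$ with $\Sz(Y,\eps)\le C_1\eps^{-q}$ for every $\eps>0$. The quantitative refinement of Theorem~\ref{KOS} (as developed by Godefroy--Kalton--Lancien and Raja) then yields an equivalent norm on $Y$ under which $\ov{\rho}_Y(t)\le Kt^{q^*}$, where $q^*=q/(q-1)$ is the conjugate exponent of $q$. Since $q<p^*$, we have $q^*>p$. Applying the previous step to this specific norm gives $c't^p\le Kt^{q^*}$ on a right neighborhood of $0$, that is, $c'/K\le t^{q^*-p}$; the right-hand side tends to $0$ as $t\to 0^+$ while the left-hand side is a positive constant, a contradiction. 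Hence $p_Y\ge p^*$.

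The main technical input beyond what is recorded in the paper is the quantitative form of the renorming theorem, with matched \emph{conjugate} exponents, as opposed to the qualitative equivalence stated in Theorem~\ref{KOS}. This matching is a consequence of the duality between asymptotic uniform smoothness of $Y$ and weak$^*$-asymptotic uniform convexity of $Y^*$, combined with the fact that the Szlenk derivations precisely quantify the latter; the delicate point is to have this exponent-matching statement available for quotation rather than only the qualitative version from Theorem~\ref{KOS}.
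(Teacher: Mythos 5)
Your argument is correct and is essentially the paper's proof: the paper likewise combines Corollary~\ref{betaAUS} (applied under an arbitrary equivalent norm on $Y$) with the identity $p_Y=\inf\{q\geq 1:\ \exists$ an equivalent norm with $\ov{\rho}(t)\le ct^{q^*}\}$, which is exactly the ``quantitative, conjugate-exponent'' statement you flag as the needed input; it is Theorem~4.8 of \cite{GodefroyKaltonLancien2001}, so it is indeed available for quotation.
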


\begin{proof} This is a direct consequence of Theorem \ref{betaAUS} and Theorem 4.8 in \cite{GodefroyKaltonLancien2001} which  insures that
$$p_Y=\inf\{q\geq 1,\ \text{there\ is\ an\ equ.\ norm}\ |\ |\ \text{on}\ Y,
\ \ \exists c>0\ \forall t>0,\ \ov{\rho}_{|\ |}(t)\le ct^{q*}\}.$$
\end{proof}

\indent

We now wish to summarize  what is known about the isomorphic characterization of property $(\beta)$. We shall limit ourselves to separable Banach spaces. First, we need to mention some notions closely related to asymptotic uniform smoothness and asymptotic uniform convexity. The first one is the
\emph{uniform Kadec-Klee} property (UKK) introduced by R. Huff \cite{Huff1980}. He also defined the so-called \emph{nearly uniformly convex} (NUC) spaces and proved in \cite{Huff1980} that a Banach space $X$ is NUC if and only if it is reflexive and UKK, or equivalently, reflexive and AUC. Then S. Prus defined in \cite{Prus1989} the \emph{nearly uniformly smooth} (NUS) spaces and showed that a Banach space is NUS if and only if its dual is NUC. This can be rephrased as follows: a reflexive Banach space is AUS if and only if its dual is AUC. Finally, D. Kutzarova proved in \cite{Kutzarova1990} that if $X$ is a Banach space with a Schauder basis, then $X$ admits an equivalent norm with property $(\beta)$ if and only if $X$ admits an equivalent NUC norm and an equivalent NUS norm. One can gather these works to obtain the following statement.

\begin{thm} Let $X$ be a separable Banach space. The following assertions are equivalent.

(i) $X$ admits an equivalent norm with property $(\beta)$.

(ii) $X$ is reflexive, $\text{Sz}(X)\le \omega$ and $\text{Sz}(X^*)\le \omega$.

(iii) $X$ is reflexive, admits an equivalent AUS norm and an equivalent AUC norm.

(iv)  $X$ is reflexive and admits an equivalent norm which is simultaneously AUS of power type $q$ and AUC of power type $p$ for some  $1 < q \le p < \infty$

(v)  $X$ embeds isomorphically into a reflexive Banach space $Y$ with a finite-dimensional decomposition which satifies $1-(p,q)$-estimates, where $p$ and $q$ are  as in (iv).

\end{thm}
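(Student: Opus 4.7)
The plan is to set up the cycle $(i)\Rightarrow(iii)\Rightarrow(ii)\Rightarrow(iv)\Rightarrow(v)\Rightarrow(i)$, each link being a citation‐plus‐glue argument drawing on the papers already referenced in the surrounding discussion. Nothing genuinely new is needed; the point is to verify that the classical renorming theory already suffices.

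For $(i)\Rightarrow(iii)$ I would invoke Kutzarova's structural theorem from \cite{Kutzarova1990}: property $(\beta)$ implies the space is simultaneously NUC and NUS. By Huff's theorem \cite{Huff1980}, NUC is the same as being reflexive and AUC, and by Prus \cite{Prus1989} combined with reflexivity, NUS amounts to AUS, so (iii) follows. For $(iii)\Rightarrow(ii)$, the AUS renorming gives $\text{Sz}(X,\varepsilon)$ finite for every $\varepsilon>0$ with the power‐type upper bound and in particular $\text{Sz}(X)\le\omega$ (this is well known; see for instance the original Knaust–Odell–Schlumprecht paper \cite{KnaustOdellSchlumprecht1999}). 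Using reflexivity, the AUC renorming on $X$ gives an AUS renorming on $X^*$ by the duality result of Prus, and the same Szlenk argument then yields $\text{Sz}(X^*)\le\omega$.

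For $(ii)\Rightarrow(iv)$ I would apply Theorem \ref{KOS} (the Knaust–Odell–Schlumprecht theorem) to both $X$ and $X^*$: this yields an equivalent norm $|\cdot|_1$ on $X$ which is AUS of some power type $q>1$, and an equivalent norm on $X^*$ which is AUS of some power type; dualizing back, one gets an equivalent norm $|\cdot|_2$ on $X$ which is AUC of some power type $p<\infty$ (here one uses that the space and its dual are both reflexive and one may replace $q$ by a larger number and $p$ by a smaller one so that $1<q\le p<\infty$). To merge these into a single norm having both properties simultaneously, I would consider the equivalent norm $|||x|||^r := |x|_1^r + |x|_2^r$ for a suitable $r$, or more carefully the construction of Dilworth–Kutzarova–Randrianarivony‐style: the rough idea is that an $\ell_r$‐type combination preserves the AUS modulus of $|\cdot|_1$ up to constants and the AUC modulus of $|\cdot|_2$ up to constants when $r$ is chosen appropriately in between $q$ and $p$. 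The implication $(iv)\Rightarrow(v)$ is Prus's FDD embedding theorem from \cite{Prus1989}: a reflexive space whose norm is simultaneously AUS of power type $q$ and AUC of power type $p$ embeds into a reflexive space with an FDD satisfying upper $\ell_q$ and lower $\ell_p$ block estimates, which is exactly the $1$–$(p,q)$‐estimate condition. Finally $(v)\Rightarrow(i)$ is obtained by renorming the enveloping FDD space with the standard Kutzarova‐type norm that combines the upper and lower block estimates; property $(\beta)$ then passes to the isomorphic subspace $X$.

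The main obstacle in this chain is the merging step inside $(ii)\Rightarrow(iv)$: going from the existence of two separate equivalent norms (one AUS of power type, one AUC of power type) to a single equivalent norm that enjoys both asymptotic properties simultaneously. This requires a careful interpolation/averaging of the two norms together with a verification that the power‐type moduli on the AUS and AUC sides survive in the resulting norm; the other implications are essentially a bookkeeping of citations from \cite{Huff1980}, \cite{Prus1989}, \cite{Kutzarova1990}, \cite{KnaustOdellSchlumprecht1999}, and \cite{Raja2010}.
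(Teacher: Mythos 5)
Your cycle has two genuine gaps, and they sit exactly at the points where the paper has to do real work. First, in $(i)\Rightarrow(iii)$ you quote Kutzarova's theorem as ``property $(\beta)$ implies the space is simultaneously NUC and NUS.'' That is not what \cite{Kutzarova1990} gives: property $(\beta)$ does imply NUC (Rolewicz), but the passage from $(\beta)$ to an equivalent NUS norm is only proved there for spaces with a Schauder basis (Corollary 8 of \cite{Kutzarova1990}). For a general separable $X$ this step is the crux, and the paper handles it by noting that $(\beta)$ passes to quotients, so every quotient of $X$ with a Schauder basis has an equivalent NUS norm and hence Szlenk index at most $\omega$; Proposition 3.5 of \cite{Lancien1993} then gives $\text{Sz}(X)\le\omega$, and Theorem \ref{KOS} produces the AUS renorming of $X$ itself. (An alternative, also sketched in the paper, blocks a shrinking Markushevich basis to get $(\infty,q)$-estimates and invokes Prus.) Without one of these detours your implication $(i)\Rightarrow(iii)$ is unproved for spaces lacking a basis.

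Second, your proposed merging step in $(ii)\Rightarrow(iv)$ does not work. If $|\cdot|_1$ is AUS and $|\cdot|_2$ is merely AUC (and equivalent), then for $|||x|||^r=|x|_1^r+|x|_2^r$ the term coming from $|\cdot|_2$ contributes $O(t)$ to $\sup_{y\in S_Y}|||x+ty|||-|||x|||$ for \emph{every} finite-codimensional $Y$, since there is no asymptotic smoothness hypothesis on $|\cdot|_2$ to beat the triangle inequality; so $\overline{\rho}_{|||\cdot|||}(t)/t$ need not tend to $0$ and the combined norm can fail to be AUS. Producing a single norm that is simultaneously AUS and AUC of power type is precisely the content of the Odell--Schlumprecht results the paper cites ($(ii)\Leftrightarrow(iii)\Leftrightarrow(iv)$ is Theorem 7 and Remark 1 of \cite{OdellSchlumprecht2006}, and $(iv)\Leftrightarrow(v)$ is Corollary 2.14 of \cite{OdellSchlumprecht2006/2}); the correct logical route runs through the embedding into a reflexive space with an FDD satisfying $1$-$(p,q)$-estimates, i.e.\ through (v), not through a direct interpolation of two norms. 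Relatedly, attributing $(iv)\Rightarrow(v)$ to \cite{Prus1989} is off: Prus gives the upper-estimate (NUS) half, but the full two-sided embedding is Odell--Schlumprecht. Your closing step $(v)\Rightarrow(i)$ is fine in spirit, and matches the paper's use of Theorem 4 of \cite{Kutzarova1990} (NUS $+$ NUC $\Rightarrow(\beta)$) together with the fact that $(\beta)$ passes to subspaces.
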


\begin{proof} The equivalences $(ii) \Leftrightarrow (iii) \Leftrightarrow (iv)$ can be derived from \cite{KnaustOdellSchlumprecht1999} and are explicit in \cite{OdellSchlumprecht2006} (Theorem 7 and Remark 1) while
$(iv) \Leftrightarrow (v)$ is  contained in Corollary 2.14 of \cite{OdellSchlumprecht2006/2}.  The implication $(iv)\Rightarrow (i)$ follows from Theorem 4 in \cite{Kutzarova1990} insuring that a Banach space which is both NUS and NUC has property $(\beta)$.

\noindent Finally, assume that $X$ has property $(\beta)$. Then it is NUC (see \cite{Rolewicz1987}). On the other hand it is proved in \cite{Kutzarova1990} that if $X$  admits a Schauder basis then it has an equivalent NUS norm (Corollary 8).
Since property $(\beta)$ passes clearly to quotients, any quotient of $X$ with a Schauder basis has an equivalent NUS norm and therefore a Szlenk index at most $\omega$. Then, it follows from \cite{Lancien1993} (Proposition 3.5) that $\text{Sz}(X)\le \omega$. Finally, we can apply Theorem \ref{KOS} to deduce that $X$ admits an equivalent NUS norm. (Let us mention an alternative  proof of the  fact that if $X$ has property $(\beta)$ then $X$ has an equivalent NUS norm. The argument of Theorem 7 of \cite{Kutzarova1990} carries over to  show that any shrinking Markushevitch basis can be blocked to satisfy $(\infty, q)$-estimates. Combining this with Theorem 4.1 of \cite{Prus1989}, and the observation made at the end  of \cite{Prus1989} that the proof of  Theorem 4.1 carries over to biorthogonal systems, gives the result.)

\end{proof}

\begin{rmk} It is proved in \cite{BaudierKaltonLancien2010} that this class of Banach spaces: the ``reflexive spaces $X$ with  $\text{Sz}(X)\le \omega$ and $\text{Sz}(X^*)\le \omega$'' is stable under coarse Lipschitz embeddings and therefore under uniform homeomorphisms.
\end{rmk}

\indent

The rest of the article is devoted to another application of Theorem \ref{basis}.  A Banach space $X$ has property $(M)$ if for every weakly null sequence $(x_n)$ and every $u,v\in S_X$,
$$\limsup \|u+x_n\|=\limsup \|v+x_n\|.$$
Property $(M)$ was introduced by N. Kalton in \cite{Kalton1993}. It is proved in \cite{Kalton1993} that the Orlicz sequence spaces have an equivalent norm with property $(M)$ and in \cite{AndroulakisCazacuKalton1998} that the same is true for Fenchel-Orlicz spaces. Note that for $1\le p\neq 2<\infty$, $L_p$ does not admit any equivalent norm with property $(M)$ (see \cite{Kalton1993}).  We should also mention that a separable Banach space with property $(M)$ and not containing $\ell_1$ is AUS (see \cite{DuttaGodard2008}).

Here we consider the consequences of property $(M)$ on the asymptotic uniform convexity of a space and for the existence of non linear quotient maps.

\begin{lem}{\label{g_lem1}}
Assume $X^*$ is separable.  Then for all $x\in S_X$ and all $0<t\leq 1$, one has
$$\ov{\delta}_X(t,x)=\inf \left \{\limsup \|x+tx_n\|-1:x_n \to 0 \text{ {\rm weakly}}, \|x_n\|= 1 \right \}.$$
\end{lem}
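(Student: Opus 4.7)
The statement asserts that a sup over finite-codimensional subspaces equals an inf over weakly null normalized sequences, so the natural plan is to prove both inequalities separately; denote the right-hand side by $D$.

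For the easy direction $D \ge \overline{\delta}_X(t,x)$, I would fix a weakly null sequence $(x_n) \subset S_X$ and an arbitrary finite-codimensional subspace $Y$ of $X$. Since $X/Y$ is finite-dimensional, $Y$ admits a finite-dimensional complement $F$, and I can decompose $x_n = y_n + f_n$ with $y_n \in Y$, $f_n \in F$. The sequence $(f_n)$ is bounded and lies in the finite-dimensional space $F$, and any subsequential limit of $(f_n)$ equals minus a weak limit of $(y_n)$, hence lies in $Y \cap F = \{0\}$; this forces $\|f_n\| \to 0$ and therefore $\|y_n\| \to 1$. For large $n$, $y_n/\|y_n\| \in S_Y$ and
$$\|x + ty_n/\|y_n\|\| - 1 \ge \overline{\delta}_X(t,x,Y).$$
Routine triangle-inequality corrections bound $\|x+tx_n\|$ below by this quantity plus $o(1)$, so passing to $\limsup$ and then taking the sup over $Y$ yields $\limsup_n \|x+tx_n\|-1 \ge \overline{\delta}_X(t,x)$, and the inf over $(x_n)$ gives $D \ge \overline{\delta}_X(t,x)$.

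For the harder direction $D \le \overline{\delta}_X(t,x)$, I would invoke the separability of $X^*$ to construct a single weakly null witness. Let $(f_k)_{k \ge 1}$ be norm-dense in $B_{X^*}$ and set $Y_n = \bigcap_{k=1}^n \ker f_k$; each $Y_n$ has codimension at most $n$, so by the definition of $\overline{\delta}_X(t,x,Y_n)$ I can choose $y_n \in S_{Y_n}$ with
$$\|x + ty_n\| - 1 \le \overline{\delta}_X(t,x,Y_n) + \tfrac{1}{n} \le \overline{\delta}_X(t,x) + \tfrac{1}{n}.$$
Now I would check that $(y_n)$ is weakly null: for any $f \in B_{X^*}$ and $\eps > 0$, pick $k$ with $\|f - f_k\| < \eps$; then for $n \ge k$, $|f(y_n)| \le \|f - f_k\|\,\|y_n\| + |f_k(y_n)| < \eps$ since $f_k(y_n) = 0$. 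Passing to $\limsup$ in the displayed inequality gives $\limsup \|x+ty_n\|-1 \le \overline{\delta}_X(t,x)$, hence $D \le \overline{\delta}_X(t,x)$.

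The main obstacle is arranging a single weakly null sequence that simultaneously nearly achieves $\overline{\delta}_X(t,x,Y)$ over ``all'' finite-codimensional $Y$. The separability of $X^*$ is used precisely here, via the nested-kernel construction, to promote pointwise behavior on a countable family of functionals into weak nullity; without this hypothesis a direct diagonal argument is not available.
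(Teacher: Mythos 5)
Your proposal is correct and follows essentially the same route as the paper: the lower bound comes from the fact that a weakly null normalized sequence is asymptotically contained in any finite-codimensional subspace (your explicit $Y\oplus F$ decomposition is just a hands-on version of the paper's one-line observation that $\dist(x_n,Y)\to 0$), and the upper bound uses the identical nested-kernel construction $Y_n=\bigcap_{k\le n}\ker f_k$ from a dense sequence in $X^*$ to produce a weakly null near-minimizing sequence. No gaps.
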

    \begin{proof}
    Let $x\in S_X$, $0<t\leq 1$, $Y$ a subspace of $X$ such that $\dim X/Y<\infty$ and $x_n\to 0$ weakly with $\|x_n\|= 1$.  Since $(x_n)_n$ is weakly null, we have $\dist(x_n,Y)\to 0$.  So consider $(y_n)_n \subset Y$ such that $\|x_n-y_n\| \to 0$.  Therefore $\|y_n\|\to 1$ and
    \begin{equation}
    \limsup \|x+tx_n\|-1 = \limsup \|x+ty_n\|-1\geq \ov{\delta}_X(t,x,Y).
    \end{equation}
    Since $Y$ is arbitrary, we obtain that
    $$\limsup \|x+tx_n\|-1\geq \ov{\delta}_X(t,x).$$

    Conversely, let $(x^*_n)_{_{n\geq 1}}$ be dense in $X^*$.  Call $Y_n=\displaystyle \bigcap_{i=1}^n \ker x^*_i$.  Let $x\in S_X$, $t>0$ and $\eps>0$.  For each $n$, pick $y_n\in S_{Y_n}$ such that 
$$\|x+ty_n\|-1 \leq \ov{\delta}_X(t,x,Y_n)+\eps.$$
We have $\ov{\delta}_X(t,x)\geq \ov{\delta}_X(t,x,Y_n)$, and therefore
$$\limsup \|x+ty_n\|-1 \le \ov{\delta}_X(t,x)+\eps.$$
On the other hand, the density on $(x_n^*)$ in $X^*$ implies that $(y_n)$ is weakly null.  Since $\eps>0$ is arbitrary, this finishes the proof.
    \end{proof}

The following is then immediate.

\begin{cor}{\label{conseq}}
Assume $X^*$ is separable and $X$ has property $(M)$.  Then for all $u,v\in S_X$,
$$\ov{\delta}_X(t,u)=\ov{\delta}_X(t,v)=\ov{\delta}_X(t).$$
\end{cor}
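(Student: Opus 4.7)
The plan is to combine Lemma \ref{g_lem1} with property $(M)$ and then reduce to the definition of $\ov{\delta}_X(t)$. First I would apply Lemma \ref{g_lem1}: for any $x\in S_X$ and $0<t\leq 1$, one has
$$\ov{\delta}_X(t,x)=\inf\left\{\limsup_n\|x+tx_n\|-1 : x_n\to 0 \text{ weakly},\ \|x_n\|=1\right\}.$$
So the task reduces to showing that this infimum does not depend on $x$.

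The key observation is that if $(x_n)$ is weakly null then so is $(tx_n)$, since scalar multiplication is weakly continuous. Because the definition of property $(M)$ recalled above does not require the weakly null sequence to be normalized, it applies verbatim to $(tx_n)$ and yields
$$\limsup_n\|u+tx_n\|=\limsup_n\|v+tx_n\|$$
for every $u,v\in S_X$ and every weakly null $(x_n)$ with $\|x_n\|=1$. Consequently, the two sets of real numbers over which the infima defining $\ov{\delta}_X(t,u)$ and $\ov{\delta}_X(t,v)$ are taken coincide pointwise, so $\ov{\delta}_X(t,u)=\ov{\delta}_X(t,v)$. Denote this common value by $\delta$.

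Finally, since $\ov{\delta}_X(t)=\inf_{x\in S_X}\ov{\delta}_X(t,x)$ by the definition given in Section~2, and every term in this infimum equals $\delta$, we conclude $\ov{\delta}_X(t)=\delta=\ov{\delta}_X(t,u)$ for every $u\in S_X$. There is really no serious obstacle in this argument: once Lemma \ref{g_lem1} is established it is a direct unpacking of definitions, and the only small point to check is that property $(M)$ is stated in a form broad enough to accommodate the non-normalized weakly null sequence $(tx_n)$, which it is.
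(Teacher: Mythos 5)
Your proof is correct and is exactly the argument the paper has in mind: the paper gives no written proof, stating only that the corollary is ``immediate'' from Lemma \ref{g_lem1} together with property $(M)$, and your unpacking (apply property $(M)$ to the weakly null, non-normalized sequence $(tx_n)$, then note that $\ov{\delta}_X(t)$ is the infimum over $x\in S_X$ of the now-constant quantity $\ov{\delta}_X(t,x)$) is the intended one. Your side remark that the paper's formulation of property $(M)$ does not require normalization of the weakly null sequence is precisely the small point that makes the argument go through.
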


\begin{lem}{\label{g_lem2}}
Let $Y$ be an infinite-dimensional Banach space, $u\in S_Y$, $t\in (0,1]$ and $\eta>0$.  Then there exists a $2$-basic sequence $(x_n)_n\subset S_Y$ so that for each $n$,
$$\|u+tx_n\|\leq 1+ \ov{\delta}_Y(t,u)+ \frac{\eta}{2}.$$
\end{lem}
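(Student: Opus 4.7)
The plan is a standard one-vector-at-a-time construction, where at each step Mazur's lemma controls the basis constant and the definition of $\ov{\delta}_Y(t,u)$ as a supremum over finite-codimensional subspaces $Z$ of $Y$ controls the norm $\|u+tx_n\|$. The key observation making the two requirements compatible is that, \emph{regardless} of which finite-codimensional $Z$ is chosen, one has $\ov{\delta}_Y(t,u,Z)\le\ov{\delta}_Y(t,u)$, so inside $S_Z$ one can find vectors whose $\|u+t\cdot\|$ is within $\eta/2$ of $1+\ov{\delta}_Y(t,u,Z)$ and therefore automatically within $\eta/2$ of $1+\ov{\delta}_Y(t,u)$.

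More concretely, I would fix positive reals $(\eps_n)_{n\ge 2}$ with $\prod_{n\ge 2}(1+\eps_n)\le 2$ and proceed by induction. For $n=1$, the inequality
$$\inf_{y\in S_Y}\|u+ty\|-1=\ov{\delta}_Y(t,u,Y)\le\ov{\delta}_Y(t,u)$$
already yields some $x_1\in S_Y$ with $\|u+tx_1\|\le 1+\ov{\delta}_Y(t,u)+\eta/2$. Inductively, having chosen $x_1,\dots,x_{n-1}\in S_Y$ forming a basic sequence with basis constant at most $\prod_{i=2}^{n-1}(1+\eps_i)$, apply Mazur's lemma to $F_{n-1}=\linspan\{x_1,\dots,x_{n-1}\}$ with parameter $\eps_n$: this produces a finite-codimensional closed subspace $Z_n\subset Y$ such that any nonzero $z\in Z_n$ extends the given sequence to a basic sequence whose basis constant is multiplied by at most $1+\eps_n$. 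Inside $S_{Z_n}$, repeat the base-case selection to obtain $x_n\in S_{Z_n}$ with
$$\|u+tx_n\|\le 1+\ov{\delta}_Y(t,u,Z_n)+\eta/2\le 1+\ov{\delta}_Y(t,u)+\eta/2.$$

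Letting $n\to\infty$, the resulting sequence $(x_n)$ is normalized, basic with constant at most $\prod_{i\ge 2}(1+\eps_i)\le 2$, and satisfies the required estimate term by term. I do not foresee any genuine obstacle: Mazur's lemma uses only infinite-dimensionality of $Y$, and the bound on $\|u+tx_n\|$ uses only the definition of $\ov{\delta}_Y(t,u)$ as a supremum. The one point requiring attention is the order of choices inside each induction step, namely that $Z_n$ must be selected first so as to secure the basis-constant increment, and only afterward is $x_n$ picked inside $S_{Z_n}$ via the infimum defining $\ov{\delta}_Y(t,u,Z_n)$; this ordering is automatic once it is noticed.
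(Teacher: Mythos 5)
Your proof is correct and follows essentially the same route as the paper: Mazur's inductive construction of a $2$-basic sequence through a decreasing chain of finite-codimensional subspaces $Y_n$, choosing each $x_n\in S_{Y_n}$ to nearly attain the infimum defining $\ov{\delta}_Y(t,u,Y_n)$ and then using $\ov{\delta}_Y(t,u,Y_n)\le\ov{\delta}_Y(t,u)$. No further comment is needed.
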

    \begin{proof} By definition, for any finite-codimensional subspace $Z$ of $Y$ $\ov{\delta}_Y(t,u,Z)\le \ov{\delta}_Y(t,u)$. In his classical construction, Mazur builds inductively $Y_1=Y$, $x_1\in S_{Y_1}$,...., $Y_{n-1}\supset Y_{n}$ of finite codimension in $Y$, $x_n\in S_{Y_n}$... such that $(x_1,..,x_n,..)$ is a 2-basic sequence. When picking $x_n$ in $S_{Y_n}$, one can also insure that
$$\|u+tx_n\|\le 1+\ov{\delta}_Y(t,u,Y_n)+ \frac{\eta}{2}\le 1+\ov{\delta}_Y(t,u)+ \frac{\eta}{2}.$$

 \end{proof}

\begin{lem}{\label{g_lem2_cont}}
If moreover $Y^*$ is separable and $Y$ has property $(M)$, then for all $u\in S_Y$, all $t\in (0,1]$ and all $\eta>0$, there exists a $2$-basic sequence $(e_n)_n \subset S_Y$ such that for each $n$,
$$\|u\pm te_n\|\leq 1+  \ov{\delta}_Y(t)+\eta.$$
\end{lem}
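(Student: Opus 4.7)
The plan is to enhance the Mazur-style construction of Lemma \ref{g_lem2} so that the basic sequence produced is, in addition, weakly null. Since $Y^*$ is separable, fix a countable dense sequence $(y_k^*)_{k\geq 1}$ in $Y^*$. Mazur's inductive argument provides finite-codimensional subspaces $Z_n$ of $Y$ such that any choice of unit vectors $e_n\in S_{Z_n}$ produces a $2$-basic sequence. The freedom remaining in choosing $e_n$ allows us to replace $Z_n$ by the still finite-codimensional subspace $Y_n:=Z_n\cap\bigcap_{i=1}^{n}\ker y_i^*$. Any sequence $(e_n)$ with $e_n\in S_{Y_n}$ is then simultaneously $2$-basic and weakly null, exactly as in the proof of Lemma \ref{g_lem1}.

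Within $S_{Y_n}$ we pick $e_n$ so as to nearly attain the infimum defining $\ov{\delta}_Y(t,u,Y_n)$, obtaining
$$\|u+te_n\|\leq 1+\ov{\delta}_Y(t,u,Y_n)+\frac{\eta}{4}\leq 1+\ov{\delta}_Y(t,u)+\frac{\eta}{4}.$$
Since $Y$ has property $(M)$ and $Y^*$ is separable, Corollary \ref{conseq} gives $\ov{\delta}_Y(t,u)=\ov{\delta}_Y(t)$, and therefore
$$\|u+te_n\|\leq 1+\ov{\delta}_Y(t)+\frac{\eta}{4}\quad\text{for every }n\geq 1.$$

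The last step leverages property $(M)$ a second time to handle the $-$ sign. The sequence $(te_n)$ is weakly null and both $u$ and $-u$ lie in $S_Y$, so
$$\limsup_n\|u-te_n\|=\limsup_n\|{-u}+te_n\|=\limsup_n\|u+te_n\|\leq 1+\ov{\delta}_Y(t)+\frac{\eta}{4}.$$
Hence there is an index $N$ such that $\|u\pm te_n\|\leq 1+\ov{\delta}_Y(t)+\eta$ for all $n\geq N$, and the tail $(e_n)_{n\geq N}$ remains $2$-basic; relabeling yields the required sequence. The only delicate point is the simultaneous control in one inductive construction of (i) the basis constant $\leq 2$, (ii) weak nullity, and (iii) near-optimality for the modulus $\ov{\delta}_Y(t,u,Y_n)$. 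Once weak nullity is in hand, property $(M)$ upgrades the $+$ estimate to a $\pm$ estimate for free.
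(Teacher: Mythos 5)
Your proof is correct and follows essentially the same route as the paper's: run Mazur's construction with the subspaces additionally intersected with kernels of a dense sequence in $Y^*$ to make the $2$-basic sequence weakly null, invoke Corollary \ref{conseq} to replace $\ov{\delta}_Y(t,u)$ by $\ov{\delta}_Y(t)$, and use property $(M)$ (with $v=-u$) to convert the one-sided estimate into the $\pm$ estimate after passing to a tail/subsequence.
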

    \begin{proof} The two previous statements imply that for all $n$, $\|u+tx_n\|\le 1+\ov{\delta}_Y(t)+ \frac{\eta}{2}$. Moreover, the separability of $Y^*$, allows us, by carefully choosing the $Y_n$'s in the proof of Lemma \ref{g_lem2}, to have that $(x_n)$ is weakly null.  Then, it follows from property $(M)$ that $\limsup \|u-tx_n\|=\limsup \|u+tx_n\|.$ We conclude the proof by taking a subsequence $(e_n)$ of $(x_n)$.
    \end{proof}

\indent

We can now prove the following.

\begin{thm}{\label{g_thm1}}
Let $X$ be a Banach space with property $(\beta)$ and $Y$ be a separable Banach space with property $(M)$. Assume that $Y$ is a uniform quotient of a subset of $X$, where the uniform quotient map is Lipschitz for large distances. Then $Y$ is AUS, AUC and satisfies:
$$\exists C\ge 1\ \ \forall t\in (0,1]\ \ \frac 1C \ov{\beta}_X\left(\frac tC \right)\leq \ov{\delta}_Y(t).$$
\end{thm}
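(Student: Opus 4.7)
The plan is to derive the displayed inequality as a direct application of Theorem \ref{basis}, using Lemma \ref{g_lem2_cont} to provide the test vectors, and then to read off AUC and AUS as consequences.

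Concretely, assuming $Y^*$ is separable, I would fix $u \in S_Y$, $t \in (0,1]$, and $\eta > 0$, and invoke Lemma \ref{g_lem2_cont} to produce a normalized $2$-basic sequence $(e_n) \subset S_Y$ satisfying $\|u \pm t e_n\| \leq 1 + \ov{\delta}_Y(t) + \eta$ for every $n$. Since $\ov{\delta}_Y(t) \leq t \leq 1$, for sufficiently small $\eta$ the number $\alpha := \ov{\delta}_Y(t) + \eta$ lies in $[0,1]$, so Theorem \ref{basis} applies with $t_0 = t$ (the corner case $\ov{\delta}_Y(t) + \eta > 1$ makes the target inequality trivial, since $\ov{\beta}_X \leq 1$), yielding
$$\tfrac{2}{3}\,\ov{\beta}_X\!\left(\tfrac{C^T t}{12 L_1^T}\right) \leq \ov{\delta}_Y(t) + \eta.$$
Letting $\eta \to 0$ and taking $C := \max\{3/2,\, 12 L_1^T/C^T\}$ yields $\tfrac{1}{C}\,\ov{\beta}_X(t/C) \leq \ov{\delta}_Y(t)$ for all $t \in (0,1]$. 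The AUC conclusion is then immediate: property $(\beta)$ of $X$ gives $\ov{\beta}_X(s) > 0$ for $s \in (0, a]$, so $\ov{\delta}_Y(t) > 0$ for $t$ small, and monotonicity extends positivity to all $t > 0$. For AUS, I would invoke the theorem of Dutta and Godard (\cite{DuttaGodard2008}) that a separable space with property $(M)$ and no $\ell_1$ subspace is AUS.

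The main obstacle is securing the two preliminary facts used above: that $Y^*$ is separable, and that $Y$ contains no isomorphic copy of $\ell_1$. Both would follow from reflexivity of $Y$, so the plan is to rule out $\ell_1 \hookrightarrow Y$. The natural attempt is a restriction argument: pass to the uniform quotient obtained by restricting $T$ to $T^{-1}(\ell_1)$ onto $\ell_1$ and aim for a contradiction with property $(\beta)$ of $X$. The delicacy is that Corollary \ref{betaAUS} alone gives only $\tfrac{2}{3}\,\ov{\beta}_X(Ct) \leq \ov{\rho}_{\ell_1}(t) = t$, which does not contradict property $(\beta)$ by itself; moreover, $\ell_1$ satisfies property $(M)$ only vacuously via the Schur property, so one cannot directly iterate the $\ov{\delta}$-version of the argument on the restricted map. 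One must therefore exploit the global property $(M)$ on $Y$, perhaps via a Rosenthal / Odell--Rosenthal-style description of the interaction between $\ell_1 \subset Y$ and weakly null sequences in $Y$. Closing this loop cleanly is the principal technical hurdle I foresee.
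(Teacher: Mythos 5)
Your main computation is exactly the paper's: apply Lemma \ref{g_lem2_cont} with $\alpha=\ov{\delta}_Y(t)$ and feed the resulting $2$-basic sequence into Theorem \ref{basis} to get $\tfrac23\,\ov{\beta}_X\bigl(C^Tt/(12L_1^T)\bigr)\leq\ov{\delta}_Y(t)$, then absorb constants. That part is fine. But the step you flag as "the principal technical hurdle" --- ruling out $\ell_1\hookrightarrow Y$ so that Dutta--Godard applies and $Y^*$ is separable --- is a genuine gap, and your proposed restriction argument onto a copy of $\ell_1$ indeed cannot work for the reason you give: $\ov{\rho}_{\ell_1}(t)=t$ carries no information against property $(\beta)$, and property $(M)$ does not pass usefully to the restricted quotient.

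The missing idea is the theorem of Johnson, Lindenstrauss, Preiss and Schechtman (\cite{JohnsonLindenstraussPreissSchechtman2002b}, Theorem 2.1): if a separable Banach space $Y$ contains an isomorphic copy of $\ell_1$, then \emph{every} separable Banach space is a Lipschitz quotient of $Y$. One does not restrict $T$ to the copy of $\ell_1$; one composes $T$ with a Lipschitz quotient map of $Y$ onto $c_0$. The composition is then a uniform quotient, Lipschitz for large distances, from a subset of $X$ onto $c_0$, and Corollary \ref{betaAUS} gives $\tfrac23\,\ov{\beta}_X\bigl(C t\bigr)\leq\ov{\rho}_{c_0}(t)=0$ for $0<t\le 1$, contradicting property $(\beta)$ of $X$. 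This forces $\ell_1\not\hookrightarrow Y$, whence $Y$ is AUS by Dutta--Godard, $Y^*$ is separable, and the rest of your argument goes through as written. Without this external input (or some substitute for it) the proof is incomplete.
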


\begin{proof} Assume first that $Y$ contains a linear copy of $\ell_1$. It follows from Theorem 2.1 in \cite{JohnsonLindenstraussPreissSchechtman2002b} that every separable Banach space is a Lipschitz quotient of $Y$. In particular, $c_0$ would be a uniform quotient of a subset of $X$ that is Lipschitz for large distances, which is impossible. Therefore $Y$ does not contain $\ell_1$ and, by Proposition 2.2 in \cite{DuttaGodard2008}, $Y$ is AUS. In particular, $Y^*$ is separable and we can apply Lemma \ref{g_lem2_cont} and Theorem \ref{basis} to obtain that
$$\forall t\in (0,1]\ \ \frac 23 \ov{\beta}_X\left(\frac{C^Tt}{12L_1^T}\right)\leq \ov{\delta}_Y(t).$$

\end{proof}

We shall now relate this to a ``Szlenk type'' derivation on the space $Y$ itself. For a weakly-closed subset $F$ of $Y$ and $\eps>0$, we define
    $$\sigma'_{\eps}(F)=F\backslash \bigcup \{\text{all weakly-open subsets of }F\text{ of diameter }\leq \eps\}.$$
Then $\sigma_\eps^\alpha(F)$ is defined inductively as usual for $\alpha$ ordinal and  $S(Y,\eps)=\inf \{\alpha:(B_Y)_{\eps}^{\alpha}=\emptyset \}$ if it exists ($:=\infty$ otherwise).

\begin{cor}{\label{g_cor}}
Under the assumptions of Theorem \ref{g_thm1}, there exists $K\geq 1$ such that for all $\eps \in (0,1)$,
$$S(Y,\eps)\leq K \ov{\beta}_X\left(\frac{\eps}{K}\right)^{-1}.$$
\end{cor}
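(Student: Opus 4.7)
The plan is to show that the weak-derivation $\sigma'_\eps$ applied to a ball of $Y$ shrinks the ball at a rate governed by $\ov{\delta}_Y$, iterate this shrinkage, and translate the resulting iteration count into $\ov{\beta}_X$ via Theorem~\ref{g_thm1}. Recall that Theorem~\ref{g_thm1} guarantees $Y$ reflexive, AUS, AUC, with property $(M)$ and $Y^*$ separable, so that the machinery of the previous section is available.

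First I would prove the shrinkage lemma: for every $r > 0$ and every $\eps \in (0, 2r]$,
\[
\sigma'_\eps(rB_Y) \subset \frac{r}{1 + \ov{\delta}_Y(\eps/(2r))}\, B_Y.
\]
Given $x \in \sigma'_\eps(rB_Y)$ with $x \neq 0$, separability of $Y^*$ makes the weak topology metrizable on bounded sets of $Y$, so there exists a sequence $(y_n) \subset rB_Y$ with $y_n \to x$ weakly and $\|y_n - x\| \geq \eps/2 - 1/n$ (otherwise some weak neighborhood of $x$ in $rB_Y$ would have norm-diameter strictly less than $\eps$). Passing to a subsequence with $\|y_n - x\| \to \tau \geq \eps/2$, I set $w_n = (y_n - x)/\|y_n - x\|$ (weakly null, normalized) and $u = x/\|x\| \in S_Y$. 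The inequality $\|x + \|y_n - x\|\, w_n\| \leq r$ becomes $\|u + t_n w_n\| \leq r/\|x\|$ with $t_n \to \tau/\|x\|$. Lemma~\ref{g_lem1} together with Corollary~\ref{conseq} (applicable because $Y$ has property $(M)$ and $Y^*$ is separable) gives $\limsup_n \|u + t w_n\| \geq 1 + \ov{\delta}_Y(t)$ for $t \in (0,1]$. In the principal case $\tau/\|x\| \leq 1$, combining this with monotonicity of $\ov{\delta}_Y$ and $\|x\| \leq r$ yields $\|x\| \leq r/(1 + \ov{\delta}_Y(\eps/(2r)))$; in the complementary case $\tau/\|x\| > 1$, a convexity argument applied to $\|u + w_n\|$ delivers an even stronger additive shrinkage, or equivalently we may simply stop the iteration once $\|x\|$ drops below $\eps/2$.

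Next I iterate: set $r_0 = 1$ and $r_{n+1} = r_n/(1 + \ov{\delta}_Y(\eps/(2r_n)))$, so that $\sigma_\eps^n(B_Y) \subset r_n B_Y$ by induction. As soon as $r_N < \eps/2$, the ball $r_N B_Y$ has norm-diameter strictly less than $\eps$, whence $\sigma_\eps^{N+1}(B_Y) = \emptyset$ and $S(Y, \eps) \leq N + 1$. Plugging in $\ov{\delta}_Y(s) \geq (1/C)\, \ov{\beta}_X(s/C)$ from Theorem~\ref{g_thm1} and partitioning the iterates into dyadic phases $r_n \in (2^{-k-1}, 2^{-k}]$ for $k = 0, \ldots, \lceil \log_2(2/\eps) \rceil$, I bound the number of steps in phase $k$ by a constant multiple of $1/\ov{\beta}_X(\eps 2^{k-1}/C)$. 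Summing the resulting geometric series yields the desired bound $S(Y, \eps) \leq K\, \ov{\beta}_X(\eps/K)^{-1}$ for a suitable $K \geq 1$.

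The main obstacle is this final summation, avoiding a spurious $\log(1/\eps)$ factor: the crude uniform lower bound $\ov{\delta}_Y(\eps/(2r_n)) \geq \ov{\delta}_Y(\eps/2)$ alone would yield only a logarithm-burdened estimate, so one must genuinely exploit that $\ov{\delta}_Y(\eps/(2r_n))$ grows as $r_n$ shrinks. The monotone, quasi-power-type behavior of $\ov{\beta}_X$ on a reflexive space with property $(\beta)$ is what makes the dyadic sum of phase lengths telescope into the clean form claimed, with the logarithm being absorbed into the constant $K$.
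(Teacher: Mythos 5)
You correctly identify the crux of the matter --- avoiding a spurious $\log(1/\eps)$ factor --- but your resolution of it is exactly where the argument breaks down. The dyadic-phase sum $\sum_k \ov{\beta}_X(\eps 2^{k-1}/C)^{-1}$ collapses to $K\,\ov{\beta}_X(\eps/K)^{-1}$ only if the terms decay geometrically in $k$, i.e.\ only if $\ov{\beta}_X$ satisfies a lower doubling condition of the form $\ov{\beta}_X(2t)\ge (1+c)\ov{\beta}_X(t)$. You assert this as a ``quasi-power-type behavior'' of $\ov{\beta}_X$ for spaces with property $(\beta)$, but no such regularity is proved in the paper, invoked from the literature, or a formal consequence of the definition: $\ov{\beta}_X$ is merely nondecreasing. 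If, for instance, $\ov{\beta}_X(t)$ behaved like $c/\log(e/t)$ near $0$, your sum would be of order $(\log(1/\eps))^2$ while the claimed bound is of order $\log(1/\eps)$, so the logarithm is \emph{not} absorbed into $K$. There are also secondary issues you wave away: $\ov{\delta}_Y$ and Lemma \ref{g_lem1} are only controlled for $t\in(0,1]$, Theorem \ref{g_thm1} only compares the moduli on $(0,1]$, and your ``complementary case $\tau/\|x\|>1$'' is left unproved.

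The paper sidesteps all of this with a self-similarity argument rather than a sharper iteration. It uses only the crude uniform estimate $\sigma'_{\eps}(rB_Y)\subset\bigl(1-\tfrac12\ov{\delta}_Y(\eps/2)\bigr)rB_Y$, so that after $n_0\le 3\ln 2/\delta$ steps (with $\delta=\tfrac12\ov{\delta}_Y(\eps/2)$) the derived set of $B_Y$ is trapped in $\tfrac12 B_Y$. Since the annulus $B_Y\setminus\tfrac12 B_Y$ contains a ball of radius $\tfrac14$, translation invariance and monotonicity of the derivation force $\sigma_{\eps}^{(n_0)}(\tfrac14 B_Y)=\emptyset$, and homogeneity ($\sigma_{\eps}^{(\alpha)}(\lambda F)=\lambda\,\sigma_{\eps/\lambda}^{(\alpha)}(F)$) converts this into $\sigma_{4\eps}^{(n_0)}(B_Y)=\emptyset$. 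Hence $S(Y,4\eps)\le n_0$ with no iteration through the dyadic scales and no summation; the rescalings $\eps\mapsto 4\eps$ and $\eps\mapsto\eps/2$ are absorbed into the constant $K$ via Theorem \ref{g_thm1}. To repair your proof you would need either this homogeneity trick or an actual proof of the doubling property of $\ov{\beta}_X$; as written, the final summation step does not go through.
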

    \begin{proof} Assume, as we may, that $Y$ has a separable dual and an asymptotically uniformly convex norm.  For any $0<r\leq 1$, we show that
    \begin{equation}{\label{sigma}}
    \sigma'_{\eps}(rB_Y) \subset \left( 1-\frac 12 \ov{\delta}_Y\left(\frac{\eps}{2}\right)\right) rB_Y,
    \end{equation}

    In fact, let $(y^*_n)_n$ be dense in $Y^*$.  Consider $y\in \sigma'_{\eps}(rB_Y)$ and without loss of generality, assume $y\neq 0$.  Note that the weakly open subset of $rB_Y$ given by $\Omega_n:=\{z\in rB_Y: \forall i=1,\cdots  n, |y^*_i(z-y)|<1/n\}$ contains $y$ and hence must have diameter greater than $\eps$.  In particular, one must find an element $y_n\in \Omega_n$ such that $\|y_n-y\|>\eps/2$.  By the density of $(y^*_n)_n$, we get that the sequence $(y_n-y)_n$ is weakly null.  By taking a subsequence, we may assume that $\lim \|y_n-y\|=t\geq \eps/2$.  From the equality
    $$\frac{y_n}{\|y\|}=\frac{y}{\|y\|}+\frac{t}{\|y\|}\cdot \frac{y_n-y}{t}$$
and noting that $\|y_n\|\leq r$, Lemma \ref{g_lem1} gives us $r/\|y\|\geq 1+\ov{\delta}_Y(t/\|y\|)\geq 1+\ov{\delta}_Y(t)\geq 1+\ov{\delta}_Y(\eps/2).$  Hence $\|y\|\leq r\left(1-\frac 12 \ov{\delta}_Y(\eps/2)\right)$.

Let us denote $\delta=\frac 12 \ov{\delta}_Y(\eps/2)$.  Iterating (\ref{sigma}) we get $\sigma_\eps^{(n)}(B_Y) \subset (1-\delta)^n B_Y$, so we can find some $n_0$ such that $\sigma_\eps^{(n_0)}(B_Y) \subset \frac 12 B_Y$.  In fact, a simple calculation shows that we can choose $n_0\leq \frac{3\ln(2)}{\delta}$.

Next, the annulus $B_Y\backslash \frac 12 B_Y$ contains a ball of radius $\frac 14$.  So we have $\sigma_\eps^{(n_0)}(\frac 14 B_Y)=\emptyset$, i.e $\sigma_{4\eps}^{(n_0)}(B_Y)=\emptyset$.  This gives $S(Y,4\eps)\leq \frac{6\ln(2)}{\ov{\delta}_Y(\eps/2)}$, or $S(Y,\eps)\leq \frac{6\ln(2)}{\ov{\delta}_Y(\eps/8)}$.  The conclusion now follows from Theorem \ref{g_thm1}.
\end{proof}

\begin{rmk} In particular if $Y$ is a separable reflexive uniform quotient of a Banach space $X$ with property $(\beta)$ and $Y$ admits an equivalent norm with property $(M)$, then $Y$ has an equivalent norm with property $(\beta)$ and there is a constant $K\geq 1$ such that for all $\eps\in (0,1)$,
$$S(Y,\eps)=\Sz(Y^*,\eps)\leq K \ov{\beta}_X\left(\frac{\eps}{K}\right)^{-1}.$$
\end{rmk}

\noindent {\bf Aknowledgements.} This work was initiated during the Concentration Week on ``Non-linear geometry of Banach spaces, geometric group theory and differentiability'' organized at Texas A\&M University (College Station) in August 2011. We wish to thank F. Baudier, W.~B. Johnson, P. Nowak and B. Sari for the perfect organization of this meeting.

We also thank the anonymous referee for helping us improve the presentation of this paper.

\begin{bibsection}
\begin{biblist}

\bib{AndroulakisCazacuKalton1998}{article}{
  author={Androulakis, G.},
  author={Cazacu, C. D.},
  author={Kalton, N. J.},
  title={Twisted sums, Fenchel-Orlicz spaces and property (M)},
  journal={Houston J. Math.},
  volume={24},
  date={1998},
  pages={105\ndash 126},
}

\bib{AyerbeDominguez_BenavidesCutillas1994}{article}{
  author={Ayerbe, J. M.},
  author={Dom\'inguez Benavides, T.},
  author={Cutillas, S. F.},
  title={Some noncompact convexity moduli for the property $(\beta )$ of Rolewicz},
  journal={Comm. Appl. Nonlinear Anal.},
  volume={1},
  date={1994},
  number={1},
  pages={87--98},
}

\bib{BatesJohnsonLindenstraussPreissSchechtman1999}{article}{
  author={Bates, S. M.},
  author={Johnson, W. B.},
  author={Lindenstrauss, J.},
  author={Preiss, D.},
  author={Schechtman, G.},
  title={Affine approximation of Lipschitz functions and nonlinear quotients},
  journal={Geom. Funct. Anal.},
  volume={9},
  date={1999},
  number={6},
  pages={1092--1127},
}

\bib{BaudierKaltonLancien2010}{article}{
  author={Baudier, F.},
  author={Kalton, N. J.},
  author={Lancien, G.},
  title={A new metric invariant for Banach spaces},
  journal={Studia Math.},
  volume={199},
  date={2010},
  number={1},
  pages={73--94},
}

\bib{BenyaminiLindenstrauss2000}{book}{
  author={Benyamini, Y.},
  author={Lindenstrauss, J.},
  title={Geometric nonlinear functional analysis. Vol. 1},
  series={American Mathematical Society Colloquium Publications},
  volume={48},
  publisher={American Mathematical Society},
  place={Providence, RI},
  date={2000},
}

\bib{DuttaGodard2008}{article}{
  author={Dutta, S.},
  author={Godard, A.},
  title={Banach spaces with property $(M)$ and their Szlenk indices},
  journal={Mediterr. J. Math.},
  volume={5},
  date={2008},
  number={2},
  pages={211--220},
}

\bib{GodefroyKaltonLancien2000}{article}{
  author={Godefroy, G.},
  author={Kalton, N. J.},
  author={Lancien, G.},
  title={Subspaces of $c\sb 0(\bold N)$ and Lipschitz isomorphisms},
  journal={Geom. Funct. Anal.},
  volume={10},
  date={2000},
  pages={798\ndash 820},
}

\bib{GodefroyKaltonLancien2001}{article}{
  author={Godefroy, G.},
  author={Kalton, N. J.},
  author={Lancien, G.},
  title={Szlenk indices and uniform homeomorphisms},
  journal={Trans. Amer. Math. Soc.},
  volume={353},
  date={2001},
  pages={3895\ndash 3918 (electronic)},
}

\bib{Huff1980}{article}{
  author={Huff, R.},
  title={Banach spaces which are nearly uniformly convex},
  journal={Rocky Mountain J. Math.},
  volume={10},
  date={1980},
  pages={743--749},
}

\bib{JohnsonLindenstraussPreissSchechtman2002}{article}{
  author={Johnson, W. B.},
  author={Lindenstrauss, J.},
  author={Preiss, D.},
  author={Schechtman, G.},
  title={Almost Fr\'echet differentiability of Lipschitz mappings between infinite-dimensional Banach spaces},
  journal={Proc. London Math. Soc.},
  volume={84},
  date={2002},
  number={3},
  pages={711--746},
}

\bib{JohnsonLindenstraussPreissSchechtman2002b}{article}{
  author={Johnson, W. B.},
  author={Lindenstrauss, J.},
  author={Preiss, D.},
  author={Schechtman, G.},
  title={Lipschitz quotients from metric trees and from Banach spaces containing $\ell _1$},
  journal={J. Functional Analysis},
  volume={194},
  date={2002},
  pages={332--346},
}

\bib{Kalton1993}{article}{
  author={Kalton, N. J.},
  title={$M$-ideals of compact operators},
  journal={Illinois J. Math.},
  volume={37},
  date={1993},
  pages={147\ndash 169},
}

\bib{KnaustOdellSchlumprecht1999}{article}{
  author={Knaust, H.},
  author={Odell, E.},
  author={Schlumprecht, T.},
  title={On asymptotic structure, the Szlenk index and UKK properties in Banach spaces},
  journal={Positivity},
  volume={3},
  date={1999},
  pages={173--199},
}

\bib{Kutzarova1989}{article}{
  author={Kutzarova, D. N.},
  title={On condition $(\beta )$ and $\Delta $-uniform convexity},
  journal={C. R. Acad. Bulgare Sci.},
  volume={42},
  date={1989},
  number={1},
  pages={15--18},
}

\bib{Kutzarova1990}{article}{
  author={Kutzarova, D. N.},
  title={An isomorphic characterization of property $(\beta )$ of Rolewicz},
  journal={Note Mat.},
  volume={10},
  date={1990},
  number={2},
  pages={347--354},
}

\bib{Kutzarova1991}{article}{
  author={Kutzarova, D. N.},
  title={$k$-$\beta $ and $k$-nearly uniformly convex Banach spaces},
  journal={J. Math. Anal. Appl.},
  volume={162},
  date={1991},
  number={2},
  pages={322--338},
}

\bib{Lancien1993}{article}{
  author={Lancien, G.},
  title={On the Szlenk index and the weak$^*$ dentability index},
  journal={Quarterly J. Math. Oxford},
  volume={47},
  date={1996},
  pages={59--71},
}

\bib{LimaRandrianarivony2010}{article}{
  author={Lima, Vegard},
  author={N. Lovasoa Randrianarivony},
  title={Property $(\beta )$ and uniform quotient maps},
  journal={Israel J. Math.},
  note={to appear},
}

\bib{LindenstraussTzafriri1977}{book}{
  author={Lindenstrauss, J.},
  author={Tzafriri, L.},
  title={Classical Banach spaces, I, Sequence spaces},
  publisher={Springer-Verlag},
  place={Berlin},
  date={1977},
}

\bib{Milman1971}{article}{
  author={Milman, V. D.},
  title={Geometric theory of Banach spaces. II. Geometry of the unit ball},
  language={Russian},
  journal={Uspehi Mat. Nauk},
  volume={26},
  date={1971},
  pages={73\ndash 149},
  note={English translation: Russian Math. Surveys {\bf 26} (1971), 79--163},
}

\bib{MontesinosTorregrosa1992}{article}{
  author={Montesinos, V.},
  author={Torregrosa, J. R.},
  title={A uniform geometric property of Banach spaces},
  journal={Rocky Mountain J. Math.},
  volume={22},
  date={1992},
  number={2},
  pages={683--690},
}

\bib{OdellSchlumprecht2006}{article}{
  author={Odell, E.},
  author={Schlumprecht, T.},
  title={Embeddings into Banach spaces with finite dimensional decompositions},
  journal={Revista Real Acad. Cienc. Serie A Mat.},
  volume={100},
  date={2006},
  pages={295--323},
}

\bib{OdellSchlumprecht2006/2}{article}{
  author={Odell, E.},
  author={Schlumprecht, T.},
  title={A universal reflexive space for the class of uniformly convex Banach spaces},
  journal={Math. Ann.},
  volume={335},
  date={2006},
  pages={901--916},
}

\bib{Prus1989}{article}{
  author={Prus, S.},
  title={Nearly uniformly smooth Banach spaces},
  journal={Boll. Un. Mat. Ital. B},
  volume={7},
  date={1989},
  number={3},
  pages={507--521},
}

\bib{Raja2010}{article}{
  author={Raja, M.},
  title={On weak$^*$ uniformly Kadec-Klee renormings},
  journal={Bull. London Math. Soc.},
  volume={42},
  date={2010},
  number={2},
  pages={221--228},
}

\bib{Ribe1984}{article}{
  author={Ribe, M.},
  title={Existence of separable uniformly homeomorphic nonisomorphic Banach spaces},
  journal={Israel J. Math.},
  volume={48},
  date={1984},
  pages={139--147},
}

\bib{Rolewicz1987}{article}{
  author={Rolewicz, S.},
  title={On drop property},
  journal={Studia Math.},
  volume={85},
  date={1987},
  pages={27--35},
}

\bib{Rolewicz1987too}{article}{
  author={Rolewicz, S.},
  title={On $\Delta $ uniform convexity and drop property},
  journal={Studia Math.},
  volume={87},
  date={1987},
  pages={181--191},
}

\end{biblist}
\end{bibsection}

\end{document}